\documentclass[10pt]{amsart}

\usepackage{amsmath,amssymb,amsthm}

\usepackage[utf8]{inputenc}

\usepackage[english]{babel}

\numberwithin{equation}{section}

\usepackage[left=4cm,right=4cm,bottom=3cm,top=3cm]{geometry}

\newcommand{\R}{\mathbb{R}}
\newcommand{\C}{\mathbb{C}}
\newcommand{\N}{\mathbb{N}}
\newcommand{\Z}{\mathbb{Z}}
\newcommand{\abs}[1]{\mathord{\left|#1\right|}}

\newcommand{\eps}{\varepsilon}

\newcommand{\zsum}{\sideset{}{^*}\sum}
\newcommand{\floor}[1]{\left\lfloor #1 \right\rfloor}
\newcommand{\nrm}[1]{\mathord{\left\lVert #1 \right\rVert}}
\newcommand{\eand}{\,\,\,\text{ and }\,\,\,}
\DeclareMathOperator{\dist}{dist}
\DeclareMathOperator{\li}{li}

\newtheorem{prop}{Proposition}
\newtheorem{thm}{Theorem}
\newtheorem{lemma}{Lemma}

\newtheorem{rem}{Remark}

\newcommand{\nce}{\eta_{c,\eps}}

\title[Bounding $\psi(x)$ analytically]{An analytic method for bounding $\psi(x)$.}
\author{Jan Büthe}
\address{Hausdorff Center for Mathematics, Endenicher Allee 62, 53115 Bonn}
\email{jan.buethe@hcm.uni-bonn.de}
\subjclass[2010]{Primary 11N05, Secondary 11M26}

\date{\today}

\begin{document}
\begin{abstract}
In this paper we present an analytic algorithm which calculates almost sharp bounds for the normalized remainder term $(t-\psi(t))/\sqrt t$ for $t\leq x$ in expected run time $O(x^{1/2+\eps})$ for every $\eps>0$. The method has been implemented and used to calculate such bounds for $t\leq 10^{19}$. In particular, these imply that $\li(x)-\pi(x)$ is positive for $2\leq x\leq 10^{19}$.
\end{abstract}

\maketitle

\section{Introduction and statement of results}
This paper concerns the problem of calculating limited range approximations to the Chebyshov function
\[
\psi(x) = \sum_{p^m\leq x} \log(p) = x + o(x).
\]
More precisely, we are interested in calculating almost sharp bounds for the normalized error term
\begin{equation}
R_\psi(t) = \frac{t-\psi(t)}{\sqrt t}
\end{equation}
in the prime number theorem for $\psi(t)$ in finite intervals $[x,Lx]$. So far, such calculations seem to have been based on tabulating prime numbers (see e.g. \cite{PT14, Rosser41, RS62}), whereas bounds for unlimited ranges are usually derived analytically (see e.g. \cite{Dusart98,FK15,Rosser41,Rosser75, RS62, Schoenfeld76}). The elementary approach leads to a run time of $\tilde O(x)$ for fixed $L$ and $x\to \infty$, where $f(x) = \tilde O(g(x))$ means \textit{there exists an $A$ such that $f(x) = O(g(x)\log(x)^A)$}. In this paper we present an analytic algorithm for this task, which satisfies the assertion of the following theorem.

\begin{thm}\label{t:main-theorem}
For every triple $(L,\delta,\theta)\in(1,\infty)\times (0,\infty) \times (0,1/2]$, there exist effectively computable constants $C_1=C_1(L,\theta,\delta)$ and $C_2=C_2(L,\theta,\delta)$ and an algorithm which takes $x\geq 2$ and the zeros $\rho$ of the Riemann zeta function with $0< \Im(\rho) \leq C_1 x^{\theta}\sqrt{\log x}$ with an accuracy of $x^{-C_2}$ as input, performs $\tilde O([1+N_e(C_1 x^{\theta}\sqrt{\log x})] x^\theta)$ arithmetic operations on $\tilde O(x^\theta)$ variables of size $C_2 \log x$, where $N_e(T)$ denotes the number of zeros with $0< \Im(\rho) \leq T$ violating the Riemann Hypothesis, and outputs numbers $M_L^+(x)$  and $M_L^-(x)$ satisfying
\begin{equation}\label{e:analytic-bounds}
 \sup_{x\leq t\leq Lx}\pm \frac{t-\psi(t)}{\sqrt t} \leq \pm M_L^\pm(x) \leq  \sup_{x\leq t\leq L x} \pm\frac{t-\psi(t)}{\sqrt{t}} + \delta x^{1/2-\theta},
\end{equation}
where pluses and minuses are to be taken correspondingly.
\end{thm}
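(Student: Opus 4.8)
The plan is to run the classical explicit formula against a Gaussian mollifier on the multiplicative line, chosen so that the sum over zeros can be truncated at height $\asymp x^{\theta}\sqrt{\log x}$ while the de-mollification costs only $O(x^{1/2-\theta})$, and then to read off the extrema by a fast Fourier evaluation over the supplied zeros. Fix a Gaussian-type family of probability densities $\eta_{c,\eps}\ge 0$ on $\R$, $\int\eta_{c,\eps}=1$, of width $\eps$, whose Mellin transform $\widehat{\eta_{c,\eps}}(s)=\int e^{-sv}\eta_{c,\eps}(v)\,dv$ is entire and satisfies $\lvert\widehat{\eta_{c,\eps}}(\tfrac12+i\gamma)\rvert\ll e^{-c(\eps\gamma)^2}$ on the critical line, and set $\psi_{c,\eps}(t)=\int\psi(te^{-v})\,\eta_{c,\eps}(v)\,dv$. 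I would take $\eps$ equal to a small multiple of $x^{-\theta}$, the multiple depending only on $(L,\theta,\delta)$. (It is convenient to run the two inequalities in \eqref{e:analytic-bounds} with two slightly different kernels $\lambda_{c,\eps},\eta_{c,\eps}$; I write $\eta_{c,\eps}$ for either.)

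First I would prove the de-mollification estimate
\[
\sup_{x\le t\le Lx}\pm R_\psi(t)=\sup_{x\le t\le Lx}\pm R_{\psi_{c,\eps}}(t)+O_{L,\theta,\delta}\!\left(x^{1/2-\theta}\right).
\]
Splitting the convolution integral at $\lvert v\rvert=R\eps$ for a constant $R$ to be chosen, the inner part is bounded by $\psi(te^{R\eps})-\psi(te^{-R\eps})\ll R\eps t+(\text{lower order})$ via Brun--Titchmarsh and the sparseness of higher prime powers, while the outer part is $\ll t\eps\,e^{-R^2/2}$ — the crucial point being that the weight $\eta_{c,\eps}$ contributes an extra factor $\eps$, so a \emph{constant} $R$ already makes the Gaussian tail harmless; dividing by $\sqrt t$ and taking $\eps$ a small enough multiple of $x^{-\theta}$ gives the claim, and comparing suprema over the slightly enlarged interval $[xe^{-R\eps},Lxe^{R\eps}]$ turns it into the two-sided form needed for \eqref{e:analytic-bounds}. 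Next comes the mollified explicit formula, obtained by shifting the contour in the Perron integral for $-\zeta'/\zeta$ weighted by $\widehat{\eta_{c,\eps}}$,
\[
t-\psi_{c,\eps}(t)=\sum_\rho\frac{t^\rho}{\rho}\,\widehat{\eta_{c,\eps}}(\rho)+\log(2\pi)+\tfrac12\log(1-t^{-2})+\CE_{c,\eps}(t),
\]
with $\CE_{c,\eps}$ accounting for the trivial zeros and negligible. By the Gaussian decay of $\widehat{\eta_{c,\eps}}$ the tail of the sum over zeros with $\lvert\gamma\rvert>T$ is $\ll\sqrt t\,e^{-c(\eps T)^2}(\log T)^{O(1)}$; choosing $T=C_1x^\theta\sqrt{\log x}$ with $C_1=C_1(L,\theta,\delta)$ large enough makes this $\ll x^{1/2-\theta}$ too, so that — up to errors which total at most $\delta x^{1/2-\theta}$ and are all bounded above rigorously — the problem reduces to computing $\sup_{x\le t\le Lx}\pm t^{-1/2}\big(-\sum_{0<\lvert\gamma\rvert\le T}(t^\rho/\rho)\widehat{\eta_{c,\eps}}(\rho)-\log(2\pi)-\tfrac12\log(1-t^{-2})\big)$.

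For the algorithmic step, write $t=e^u$ with $u\in[\log x,\log x+\log L]$. The elementary terms are $O(1)$ and easy; the on-line zeros contribute $G_0(u)=-\sum_{\rho=1/2+i\gamma,\,0<\lvert\gamma\rvert\le T}(\widehat{\eta_{c,\eps}}(\rho)/\rho)e^{i\gamma u}$, a function of $u$ with spectrum in $[-T,T]$ and — by absolute convergence of $\sum\lvert\widehat{\eta_{c,\eps}}(\rho)\rvert/\lvert\gamma\rvert$ — bounded in $L^\infty$ by $O((\log x)^2)$. By Bernstein's inequality $\lvert G_0'\rvert\ll T(\log x)^2$, so a uniform grid of spacing $\asymp\delta x^{1/2-\theta}/\big(T(\log x)^2\big)$, that is $\tilde O(x^\theta)$ points (here $\theta\le\tfrac12$ is used), suffices to locate $\sup_u\pm G_0(u)$ to within $\tfrac{\delta}{8}x^{1/2-\theta}$. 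Evaluating $G_0$ at all these points at once is a non-equispaced discrete Fourier sum $\sum_\gamma c_\gamma e^{i\gamma u_j}$, which a non-uniform FFT performs in $\tilde O(x^\theta)$ arithmetic operations on $\tilde O(x^\theta)$ registers of $O(\log x)$ bits — the accuracy $x^{-C_2}$ demanded of the input zeros, and the working precision $C_2\log x$, are exactly what keeps this rigorous. Finally each zero off the critical line is stripped off and its single term $c\,t^\rho$ evaluated and maximised separately on the (suitably refined) grid, which multiplies the cost by $1+N_e(T)$; setting $M_L^\pm(x)$ to the resulting grid supremum inflated by the accumulated error yields \eqref{e:analytic-bounds}.

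The step I expect to be the main obstacle is the construction and exploitation of the mollifier with fully effective constants: one needs a density that is concentrated at scale $\eps$ \emph{and} has Mellin transform with genuine Gaussian decay $e^{-c(\eps\gamma)^2}$ on the critical line (so that $T\asymp x^\theta\sqrt{\log x}$ really does suffice), one needs the de-mollification to lose only $O(x^{1/2-\theta})$ rather than the naive $O(x^{1/2-\theta}\sqrt{\log x})$ — which hinges on the extra factor of $\eps$ in the tail estimate above — and then all of: the truncation of the zero sum, the de-mollification, the kernel tails, the trivial-zero term, the elementary terms, the grid discretisation and the numerical error of the FFT have to be bounded \emph{above} so that their total stays under $\delta x^{1/2-\theta}$. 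It is this accounting, and in particular making the powers of $\log x$ come out right, that pins down $C_1(L,\theta,\delta)$ and $C_2(L,\theta,\delta)$.
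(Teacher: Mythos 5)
Your proposal follows the same overall architecture as the paper — mollify $\psi$ at scale $\eps\asymp x^{-\theta}$, apply a mollified explicit formula truncated at height $T\asymp x^\theta\sqrt{\log x}$, evaluate the resulting band-limited trigonometric sum on a grid by an FFT-type method, handle off-line zeros individually, and de-mollify — but it substitutes different technical modules at each step, and those substitutions are substantive enough to count as a different route. The paper works with the compactly supported Bessel kernel $\eta_{c,\eps}$ of \cite{BuetheB}, whose Fourier/Mellin dual is Logan's function $\ell_{c,\eps}$; this kernel is extremal for a Logan-type minimization problem and gives the $\sqrt{\log x}$ saving through the asymptotic $\abs{\nu_c(0)}\sim(2\pi c)^{-1/2}$ in the one-sided shift estimate of Proposition~\ref{p:Mxceps-bound} (taking $\eps\asymp x^{-\theta}\sqrt{\log x}$), whereas you use a Gaussian and get the same $T$ from the Gaussian decay $e^{-\eps^2\gamma^2/2}$ of the Mellin transform (taking $\eps\asymp x^{-\theta}$). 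Both balance to $T\asymp x^\theta\sqrt{\log x}$; the Logan kernel wins on explicit constants, which matters for Theorem~\ref{t:explicit-bounds} but not for the asymptotic Theorem~\ref{t:main-theorem}. You pass from the grid to the supremum via Bernstein's inequality applied to the band-limited sum $G_0$, whereas the paper interpolates the rough function $\psi$ directly, on a geometric grid of arguments, via Brun--Titchmarsh (Proposition~\ref{p:psi-grid}, and in the proof of Theorem~\ref{t:main-theorem} a trivially coarsened version of it); your Bernstein route is cleaner conceptually, but the paper's route is the one that produces the explicit interpolation lemma needed later. You invoke a generic non-uniform FFT, whereas the paper spells out a specific rounding-based FFT (Proposition~\ref{p:trigs-evaluation}, due to Franke) and also uses bandlimited interpolation (Proposition~\ref{p:bi}) to cut memory; both give the stated complexity.

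One place where your write-up, as phrased, is not quite right and would need a small repair: in the de-mollification you split the convolution at $\abs{v}=R\eps$ with $R$ constant and assert the outer part is $\ll t\eps\,e^{-R^2/2}$, relying on the Brun--Titchmarsh-type bound $\abs{\psi(te^{-v})-\psi(t)}\ll\abs{v}\,t$. That bound fails for $\abs{v}>1$, where $\psi(te^{\abs v})-\psi(t)\asymp t\,e^{\abs v}$, so a priori the outer part is only $\ll t\,e^{-R^2/2}$ and a constant $R$ would not suffice. The estimate is nevertheless salvageable because the Gaussian weight is doubly exponentially small past $\abs v=1$ (the contribution from $\abs v>1$ is $\ll t\,e^{-1/(2\eps^2)}$), and on $R\eps<\abs v\le 1$ the $\abs v\,t$ bound does hold and integrates to $\ll t\eps e^{-R^2/2}$; but this range-splitting has to be stated, otherwise the claimed factor of $\eps$ is unjustified. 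The paper sidesteps this entirely by choosing a kernel with support exactly $[-\eps,\eps]$, so there simply is no outer tail; this is one of the quieter reasons the Bessel--Logan kernel is convenient here.
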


If $L$ is sufficiently large the Riemann Hypothesis (RH) implies (\cite[15.2.1, Ex. 2b)]{Montgomery2006})
\begin{equation}
 \liminf_{x\to\infty} \sup_{x\leq t\leq Lx}\pm \frac{t-\psi(t)}{\sqrt t} > 0.
\end{equation}
Furthermore, the expected run time for calculating zeros of the zeta function with imaginary part up to $T$ within an accuracy of $T^{-c}$ for any $c>0$ is $O(T^{1+\eps})$ for every $\eps>0$, assuming RH and simplicity of the zeros (see \cite{OS88}). So if we take $\theta = 1/2$ and $\delta$ sufficiently small, the algorithm calculates almost sharp bounds for $R_\psi(x)$ in $[x,Lx]$ in expected run time $O(x^{1/2+\eps})$ for every $\eps>0$.

The algorithm has been implemented and used to calculate analytic bounds for $x\leq 10^{19}$, using the zeros with imaginary part up to $10^{11}$, whose calculation has been reported in \cite{FKBJ}. The calculated bounds also give rise to improved bounds for the functions
\begin{equation}
\pi(x) = \sum_{p\leq x} 1,\quad \vartheta(x) = \sum_{p\leq x} \log(p),\quad\text{and}\quad \pi^*(x) = \sum_{k\geq 1} \frac{\pi(x^{1/k})}{k}.
\end{equation}
The numerical results are summarized in the following theorem.

\begin{thm}\label{t:explicit-bounds}
The following estimates hold:
\begin{align}
\abs{x-\psi(x)} &\leq 0.94 \sqrt{x} &\text{for $11 < x \leq 10^{19}$,} \label{e:psi-bound-final} \\
x-\vartheta(x) &\leq 1.95 \sqrt x &\text{for $1423 \leq x \leq 10^{19}$,} \label{e:theta-upper}\\
x - \vartheta(x) &> 0.05\sqrt x&\text{for $1\leq x \leq 10^{19}$,} \label{e:theta-lower} \\
\abs{\li(x)-\pi^*(x)}&< \frac{\sqrt{x}}{\log x}&\text{for $2 \leq x \leq 10^{19}$,} \label{e:pi-star-bound} \\
\li(x)-\pi(x) &\leq \frac{\sqrt x}{\log(x)}\Bigl( 1.95 + \frac{3.9}{\log x} + \frac{19.5}{\log(x)^2} \Bigl)&\text{for $2 \leq x \leq 10^{19}$,} \label{e:pi-upper} \\
\li(x) - \pi(x) &> 0&\text{for $2 \leq x \leq 10^{19}$.}\label{e:pi-lower}
\end{align}
\end{thm}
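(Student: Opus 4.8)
The proof rests on Theorem~\ref{t:main-theorem}, used with $\theta=1/2$, together with the classical elementary identities relating $\psi$, $\vartheta$, $\pi^*$ and $\pi$. \emph{The bound on $\psi$.} Fix a convenient ratio $L>1$ and choose $\delta$ small enough that the additive slack $\delta x^{1/2-\theta}=\delta$ in \eqref{e:analytic-bounds} is comfortably below the target tolerance. Cover $[11,10^{19}]$ by the finitely many subintervals $[x_k,Lx_k]$ with $x_k=11L^k$, $k=0,1,\dots,\lceil\log(10^{19}/11)/\log L\rceil$, and run the algorithm on each $x_k$, supplying as input the rigorously computed nontrivial zeros of $\zeta$ with imaginary part up to $C_1(10^{19})^{1/2}\sqrt{\log 10^{19}}$, i.e. up to roughly $10^{11}$, from \cite{FKBJ}. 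By the left-hand inequality of \eqref{e:analytic-bounds}, $M_L^+(x_k)$ is an upper bound and $M_L^-(x_k)$ a lower bound for $R_\psi$ on $[x_k,Lx_k]$, so $\sup_{11<t\le 10^{19}}\abs{R_\psi(t)}\le\max_k\max\{M_L^+(x_k),-M_L^-(x_k)\}$; the implementation returns that this maximum is at most $0.94$, which is \eqref{e:psi-bound-final}. (For $x$ below the first few prime powers the inequality is checked directly; the constant $11$ is essentially optimal because $R_\psi(t)$ slightly exceeds $0.94$ for $t$ just below $11$, where $\psi$ has not yet absorbed the jump at $11$.)

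\emph{From $\psi$ to $\vartheta$.} Writing $\psi(x)-\vartheta(x)=\sum_{m\ge 2}\vartheta(x^{1/m})$, a finite sum for $x\le 10^{19}$, we estimate $\vartheta(\sqrt x)=\psi(\sqrt x)-\sum_{m\ge 2}\vartheta(x^{1/2m})=\sqrt x+O(x^{1/4})$ by applying \eqref{e:psi-bound-final} at $\sqrt x$ and using the crude bound $\vartheta(y)\ll y$, and bound the terms with $m\ge 3$ trivially, so that $\psi(x)-\vartheta(x)=\sqrt x+O(x^{1/3})$. Combining with $\abs{x-\psi(x)}\le 0.94\sqrt x$ yields $x-\vartheta(x)\le 1.95\sqrt x$ once $x$ exceeds an explicit threshold, and $x-\vartheta(x)\ge 0.06\sqrt x-O(x^{1/4})>0.05\sqrt x$ once $x$ exceeds another; the finitely many remaining $x$ (down to $1423$, resp. down to $1$) are verified against a table of primes as in \cite{RS62}. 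This proves \eqref{e:theta-upper} and \eqref{e:theta-lower}.

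\emph{From $\psi$ to $\pi^*$ and $\pi$.} Since $\pi^*(x)=\int_{2^-}^{x}(\log t)^{-1}\,d\psi(t)$, integration by parts and comparison with $\li(x)=\int_0^x(\log t)^{-1}\,dt$ give
\[
\li(x)-\pi^*(x)=\frac{x-\psi(x)}{\log x}+\int_2^x\frac{t-\psi(t)}{t\log^2 t}\,dt+c_0
\]
with an explicit constant $c_0$. Bounding $\abs{t-\psi(t)}$ by $0.94\sqrt t$ on $[11,x]$ and absorbing $[2,11]$ and $c_0$ into a constant, the main term is $\le 0.94\sqrt x/\log x$ and the integral is $O(\sqrt x/\log^2 x)$, whence $\abs{\li(x)-\pi^*(x)}<\sqrt x/\log x$ beyond an explicit point, the remaining $x$ down to $2$ checked directly; this is \eqref{e:pi-star-bound}. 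Finally $\pi^*(x)=\sum_{m\ge 1}\tfrac1m\pi(x^{1/m})$ gives $\li(x)-\pi(x)=(\li(x)-\pi^*(x))+\sum_{m\ge 2}\tfrac1m\pi(x^{1/m})$, with every term on the right of controlled sign. For \eqref{e:pi-upper} we feed the displayed identity (leading constant $0.94$, not the rounded $1$) into the $\li-\pi^*$ term, use $\pi(\sqrt x)\le\pi^*(\sqrt x)\le\li(\sqrt x)+2x^{1/4}/\log x$ by \eqref{e:pi-star-bound} at $\sqrt x$ for the $m=2$ term, bound the $m\ge 3$ terms trivially, and expand $\tfrac12\li(\sqrt x)$ in powers of $1/\log x$; the leading contribution $0.94+1=1.94$ together with the lower-order pieces fits under $1.95+3.9/\log x+19.5/\log^2 x$. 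For \eqref{e:pi-lower} we observe $\li(x)-\pi(x)\ge\tfrac12\pi(\sqrt x)-\abs{\li(x)-\pi^*(x)}>\tfrac12\pi(\sqrt x)-\sqrt x/\log x\ge 0$ as soon as $\pi(\sqrt x)\ge 2\sqrt x/\log x$, i.e. $\pi(y)\ge y/\log y$ at $y=\sqrt x$ — true for $y\ge 17$ by \cite{RS62} — so for $x\ge 289$, and the finitely many $x$ with $2\le x<289$ are verified directly.

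\emph{The main obstacle.} The transfer estimates above are routine analytic number theory once \eqref{e:psi-bound-final} is in hand; the substance of the argument is the first step, namely certifying the value $0.94$ produced by the Theorem~\ref{t:main-theorem} algorithm. The delicate points there are the rigorous control of truncating the explicit formula at height $\sim 10^{11}$, the numerical precision to which the $\sim 10^{11}$ zeros must be known for the input accuracy $x^{-C_2}$ to be admissible, and the propagation of rounding errors across the entire covering of $[11,10^{19}]$, so that the printed constant is provably a valid bound rather than merely a heuristic one.
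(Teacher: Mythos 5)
Your overall architecture matches the paper's: certify \eqref{e:psi-bound-final} by running the Theorem~\ref{t:main-theorem} algorithm over a geometric covering (plus a sieve/direct check at the low end), then transfer to $\vartheta$, $\pi^*$, and $\pi$ by elementary identities and partial summation. The first three steps are essentially the paper's; the identity you use for $\li(x)-\pi^*(x)$ is exactly the partial-summation relation in Lemma~\ref{l:psi->pi-star}, and your $\vartheta$-from-$\psi$ argument is a slightly looser version of Lemma~\ref{l:psi->theta} (the paper keeps track of Möbius signs to get the clean inequality $\vartheta(x)\le\psi(x)-\psi(\sqrt x)$, whereas you bound the $m\ge 3$ contributions trivially, which pushes the threshold below which a direct check is needed somewhat higher but is otherwise fine).

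For \eqref{e:pi-upper} and \eqref{e:pi-lower} you take a genuinely different route. The paper passes through $\vartheta$ via Lemma~\ref{l:theta->pi}, and this is where the specific constants come from: with $C=1.95$ the lemma gives exactly $1.95+\tfrac{2\cdot 1.95}{\log x}+\tfrac{10\cdot 1.95}{\log^2 x}+A\log x/\sqrt x$, i.e.\ $3.9=2\cdot 1.95$ and $19.5=10\cdot 1.95$. Positivity then follows from $x-\vartheta(x)>0$ and the classical implication in Lemma~\ref{l:theta->pi}. You instead decompose $\li(x)-\pi(x)=(\li(x)-\pi^*(x))+\sum_{m\ge 2}\tfrac1m\pi(x^{1/m})$. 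Your positivity argument, $\li(x)-\pi(x)\ge\tfrac12\pi(\sqrt x)-\sqrt x/\log x>0$ once $\pi(y)\ge y/\log y$ at $y=\sqrt x$, is a clean and valid alternative. For the upper bound, however, the sketch has a real gap: you need an explicit upper bound for $\tfrac12\li(\sqrt x)$ of the form $\frac{\sqrt x}{\log x}\bigl(1+\frac{2}{\log x}+\cdots\bigr)$, but the asymptotic series for $\li$ is divergent, so one must truncate and bound the remainder explicitly on $[10^7,10^{19}]$; and you must then verify that these coefficients, added to the sharper $0.94+\tfrac{1.88}{\log x}+\tfrac{9.4}{\log^2 x}$ coming from $\li-\pi^*$ and the trivially bounded $m\ge 3$ terms, actually fit under $1.95+\tfrac{3.9}{\log x}+\tfrac{19.5}{\log^2 x}$ over the full range. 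This may well work (the margins look positive to first order), but it is not automatic; the paper chose those particular constants precisely because they fall directly out of Lemma~\ref{l:theta->pi}, so its route avoids this bookkeeping entirely.
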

In particular, this gives a new lower bound for the Skewes number, the number $x_s\in [2,\infty)$ where the first sign change of $\li(x)-\pi(x)$ occurs. The last published lower bound appears to be $x_s \geq 1.2\times 10^{17}$ in \cite{PT14}. Furthermore, in the earlier paper \cite{SD2011} the second author claims to have verified $x_s\geq 10^{18}$ but no further explanation is given. In total, the calculations took about $1,200$ hours on a 2.27 GHz Intel Xeon X7560 CPU.

\section*{Acknowledgment}
The author wishes to thank Jens Franke, to whom he owes the idea of applying methods for multiple evaluations of trigonometric sums to this problem and who suggested this topic as part of the author's PhD thesis.

Furthermore, he wishes to thank the anonymous referees for their comments. He is particularly grateful to the referee who spotted a mistake in the proof of Lemma 1, which has been corrected in the final version.

\section{Notations}
In addition to the usual Landau $O$ and Vinogradov $\ll$ notation, we frequently use Turing's big theta notation: $g(t) = \Theta(f(t))$ for $t\in U$ $\Leftrightarrow$ $g(t)\leq \abs{f(t)}$ for $t\in U$. Furthermore, the notation $f(t) \asymp g(t)$ is used for \textit{$f(t) \ll g(t)$ and $g(t) \ll f(t)$}. Finally, $f_\pm(t) := \lim_{h\searrow 0} f(t \pm h)$ denotes the limit from the right, respectively left.

\section{Description of the method}
The basic idea of the method presented in this paper is to use an explicit formula to bound $\psi(t)$ at sufficiently many well-distributed points in $[x,Lx]$ which are then extended to the whole interval by interpolation.

To illustrate the first task recall the well-known approximate version of the von Mangoldt explicit formula
\begin{equation}\label{e:vMangoldt-approx}
\psi(x) = x - \sum_{\abs{\Im(\rho)}<T} \frac{x^\rho}{\rho} - \log 2\pi - \frac 12 \log(1-x^{-2}) + O\Bigl( \frac{x}{T}\log(x)^2\Bigr)
\end{equation}
for $T \ll x$, where the sum is taken over the non-trivial zeros of the Riemann zeta function. Using this formula, approximating $R_\psi(t)$ within an accuracy of $O(x^{1/2-\theta})$ can be done by calculating the contribution of zeros with imaginary part up to $T = C  x^{\theta} \log(x)^2$. Extending these bounds to $[x,Lx]$ with an error of size $O(x^{1/2-\theta})$ can be achieved by calculating approximations at $O(x^\theta)$ well-distributed points in $[x,Lx]$, since if $\xi\in[x,Lx]$ and $0 < y \ll x^{1-\theta}$, then
\begin{align*}
R_\psi(\xi+y) - R_\psi(\xi) &= \frac{\xi+y -(1+O(x^{-\theta})) \xi}{\sqrt{\xi+y}} - \frac{\psi(\xi+y) - (1+O(x^{-\theta}))\psi(\xi)}{\sqrt{\xi+y}} \\
&= O(x^{1/2-\theta}) + O(x^{-\theta - 1/2}) \psi(\xi) + O(x^{-1/2}) (\psi(\xi+y) - \psi(\xi)) \\
&= O(x^{1/2-\theta})
\end{align*}
by the prime number theorem and the Brun-Titchmarsh inequality.

If the approximations are caltulated by directly evaluating the sum over zeros in \eqref{e:vMangoldt-approx} this leads to a run time of $\tilde O(x^{2\theta})$, which outperforms the naive method only for $\theta < 1/2$. This can be improved by using techniques for multiple evaluations of trigonometric sums, such as the Odlyzko-Sch\"onhage algorithm. These allow one to evaluate the contribution of zeros on the critical line to the sum in \eqref{e:vMangoldt-approx} on geometric progressions of length $T$ in run time $\tilde O(T) = \tilde O(x^\theta)$, reducing the run time of the algorithm to $\tilde{O}(x^\theta)$.

In principle one could use an explicit version of \eqref{e:vMangoldt-approx}, but we rather use a continuous approximation to $\psi(x)$ from \cite{BuetheB} for which a similar explicit formula exists. This decreases the truncation bound in the sum over zeros to $T=C' x^\theta \sqrt{\log x}$, saving a factor $\log(x)^{3/2}$. Also, we rather use a simpler FFT method from \cite{FKBJ} in place of the Odlyzko-Sch\"onhage algorithm for multiple evaluation of trigonometric sums.

\subsection{Bounding $\psi(x)$ analytically}
Let
\[
\psi_0(x) = \frac{1}{2}\sum_{p^m<x} \log(p) + \frac{1}{2}\sum_{p^m\leq x} \log(p)
\]
denote the normalized Chebyshov function. We intend to bound $\psi(x)$ in terms of the modified Chebyshov function
\begin{equation}\label{e:psi-psice-equal}
 \psi_{c,\eps}(x)= \psi_0(x) + \sum_{e^{-\eps}x < p^m < e^{\eps}x} \frac{1}{m}M_{x,c,\eps}(p^m)
\end{equation}
introduced in \cite{BuetheB}, where
\begin{multline}\label{e:M-def}
M_{x,c,\eps} (t) = \frac{\log t}{\lambda_{c,\eps}} \Bigl[\chi^*_{[x,\exp(\eps) x]}(t) \int_{-\eps}^{\log(t/x)}\eta_{c,\eps}(\tau) e^{-\tau/2}\, d\tau \\
- \chi^*_{[\exp(-\eps)x,x]}(t) \int_{\log(t/x)}^\eps \eta_{c,\eps}(\tau) e^{-\tau/2}\, d\tau,
\Bigr],
\end{multline}
$\chi_A^*$ denoting the normalized characteristic function which takes the value $1/2$ on the boundary of $A$,
\begin{equation}
\eta_{c,\eps}(\tau) = \frac{c}{\eps \sinh(c)} I_0(c\sqrt{1-(\tau/\eps)^2}),
\end{equation}
$I_0(y)= \sum_{n=0}^\infty (y/2)^{2n}/(n!)^2$ denoting the 0-th modified Bessel function of the first kind, and
$\lambda_{c,\eps} = \int_{-\eps}^\eps \nce(\tau) e^{\tau/2}\, d\tau$.

The function $\psi_{c,\eps}(x)$ is a continuous approximation to $\psi(x)$ and we review some of its properties. The first result provides bounds for $\psi(x)$ in terms of $\psi_{c,\eps}$.

\begin{prop}[{\cite[Proposition 4]{BuetheB}}]\label{p:Mxceps-bound}
Let
\[
\mu_{c}(t) =
\begin{cases}
-\int_{-\infty}^t\eta_{c,1}(\tau)\,d\tau	& t < 0\\
-\mu_{c}(-t)			&t>0\\
 0					&t=0
\end{cases}
\]
and let
\[
\nu_{c}(t) = \int_{-\infty}^t \mu_{c}(\tau)\, d\tau.
\]
Furthermore, let $0\leq \alpha < 1$, $x>100$, and let $0<\eps<10^{-2}$, such that
\[
B = \frac{\eps x e^{-\eps} \abs{\nu_c(\alpha)}}{2(\mu_c)_+(\alpha)} > 1
\]
holds. We define
\[
 A(x,c,\eps,\alpha) = e^{2\eps} \log (e^{\eps}x) \Bigl[ \frac{2 \eps\, x \,\abs{\nu_c(\alpha)}}{\log B} + 2.01 \eps\sqrt{x} +\frac 12 \log\log(2 x^2) \Bigr].
\]
Then we have
\[
\psi(e^{-\alpha\eps}x) \leq \psi_{c,\eps}(x) + A(x,c,\eps,\alpha),
\]
and
\[
\psi(e^{\alpha\eps}x) \geq \psi_{c,\eps}(x) - A(x,c,\eps,\alpha).
\]
\end{prop}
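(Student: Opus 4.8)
The plan is to compare $\psi$ with $\psi_{c,\eps}$ using only the monotonicity of the Chebyshov function and a short‑interval count of primes, so that the resulting bound is unconditional. The first step is to unwind the definition of $\psi_{c,\eps}$. Writing the sum over prime powers in \eqref{e:psi-psice-equal} as a Stieltjes integral against $\psi_0$ and integrating by parts once should give the smoothing identity
\[
\psi_{c,\eps}(x)=\frac{1}{\lambda_{c,\eps}}\int_{-\eps}^{\eps}\psi_0(xe^{\tau})\,\eta_{c,\eps}(\tau)\,e^{-\tau/2}\,d\tau,
\]
which exhibits $\psi_{c,\eps}(x)$ as the logarithmic convolution of $\psi_0$ with the Bessel kernel, renormalised so that the linear main term is reproduced exactly: $\frac{1}{\lambda_{c,\eps}}\int_{-\eps}^{\eps}xe^{\tau}\eta_{c,\eps}(\tau)e^{-\tau/2}\,d\tau=x$ by the definition of $\lambda_{c,\eps}$ and the evenness of $\eta_{c,\eps}$. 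Equivalently, each prime power $p^{m}$ in the window $(e^{-\eps}x,e^{\eps}x)$ enters $\psi_{c,\eps}(x)$ with the coefficient $\log(p)\,w(\log(p^{m}/x))$, where $w(u)=\frac{1}{\lambda_{c,\eps}}\int_{u}^{\eps}\eta_{c,\eps}(\tau)e^{-\tau/2}\,d\tau$ decreases continuously from $1$ at $u=-\eps$ to $0$ at $u=\eps$, while prime powers below $e^{-\eps}x$ enter with the full weight $\log(p)$.

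Granting this, for the first inequality put $y=e^{-\alpha\eps}x\le x$. Since $\psi(y)\le\psi_0(y)+\tfrac12\log(e^{\eps}x)$ and $\psi_0(y)=\psi_0(e^{-\eps}x)+\sum_{e^{-\eps}x<p^{m}\le y}\log p$, subtracting the coefficient expansion of $\psi_{c,\eps}(x)$ cancels the prime powers $\le e^{-\eps}x$, leaves those in $(y,e^{\eps}x)$ with the non‑negative weight $w\ge 0$ (which may be dropped), and leaves those in $(e^{-\eps}x,y)$ with weight $\log(p)\bigl(1-w(\log(p^{m}/x))\bigr)$. Setting $G(u):=1-w(u)=\frac{1}{\lambda_{c,\eps}}\int_{-\eps}^{u}\eta_{c,\eps}(\tau)e^{-\tau/2}\,d\tau$, this yields
\[
\psi(y)-\psi_{c,\eps}(x)\ \le\ \tfrac12\log(e^{\eps}x)\ +\ \sum_{e^{-\eps}x<p^{m}\le y}\log(p)\,G\!\left(\log(p^{m}/x)\right),
\]
so the whole problem reduces to bounding this prime‑power sum by $A(x,c,\eps,\alpha)-\tfrac12\log(e^{\eps}x)$.

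For that I would bound $\log p\le\log(e^{\eps}x)$ and treat $m=1$ and $m\ge 2$ separately. The terms with $m\ge 2$ are at most $\sum_{k\ge 2}\bigl(\vartheta(y^{1/k})-\vartheta((e^{-\eps}x)^{1/k})\bigr)$; the dominant $k=2$ term is $\le\vartheta(\sqrt{e^{\eps}x})-\vartheta(\sqrt{e^{-\eps}x})$, which elementary Chebyshov‑type bounds control by $2.01\,\eps\sqrt{x}$, the remaining powers together with a count over the exponents involved contributing the $\tfrac12\log\log(2x^{2})$. For $m=1$ one is left with the kernel‑weighted prime count $\sum_{e^{-\eps}x<p\le y}G(\log(p/x))$; Abel summation against $G$, followed by the observation that $G$ is a first antiderivative of $\eta_{c,\eps}$ (so that $\int_{e^{-\eps}x}^{y}G(\log(t/x))\,dt$ is, up to rescaling, the value at $\alpha$ of the second antiderivative $\nu_c$ of $\eta_{c,1}$, whence the factor $\abs{\nu_c(\alpha)}$), reduces this to counting primes in a short interval, which the Brun--Titchmarsh inequality $\pi(v)-\pi(w)\le 2(v-w)/\log(v-w)$ handles; after optimising the interval decomposition so that the effective length is the $\eta_{c,1}$‑weighted mean, its logarithm becomes $\log B$ with $B=\eps xe^{-\eps}\abs{\nu_c(\alpha)}/\bigl(2(\mu_c)_{+}(\alpha)\bigr)$ (the denominator $(\mu_c)_{+}(\alpha)$ being the total mass of $\eta_{c,1}$ over the relevant part of the window), the hypothesis $B>1$ being exactly what makes $\log B>0$; this gives the main term $2\eps x\abs{\nu_c(\alpha)}/\log B$. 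Assembling the three pieces with the common factor $e^{2\eps}\log(e^{\eps}x)$ then yields $A(x,c,\eps,\alpha)$. The second inequality $\psi(e^{\alpha\eps}x)\ge\psi_{c,\eps}(x)-A(x,c,\eps,\alpha)$ I would prove by the mirror argument, interchanging the two halves of the window; since $\eta_{c,1}$ is even, the same constants $\nu_c(\alpha)$ and $\mu_c(\alpha)$, and hence the same $A$, reappear.

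I expect the delicate point to be the manipulations with the Bessel kernel: because $\eta_{c,1}$ does not vanish at $\pm 1$, boundary terms survive in the successive integrations and have to be identified with the values of $\mu_c$ and $\nu_c$, and it is in keeping the resulting explicit constants under control---and in carrying out the Brun--Titchmarsh optimisation that produces $\log B$---that the bulk of the work lies; the analytic input (Brun--Titchmarsh together with elementary Chebyshov and prime‑power estimates) is classical.
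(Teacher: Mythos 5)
The paper does not prove Proposition~\ref{p:Mxceps-bound}; it is quoted verbatim from \cite{BuetheB}, so there is no in-source proof to compare against and your proposal must be judged on its own. Your overall strategy is the right one and is almost certainly the one used in \cite{BuetheB}: recognize $\psi_{c,\eps}$ as the logarithmic convolution of $\psi_0$ with the normalised Bessel kernel, reduce $\psi(e^{-\alpha\eps}x)-\psi_{c,\eps}(x)$ to a weighted prime-power sum over $(e^{-\eps}x,e^{-\alpha\eps}x]$ by dropping the nonnegative tail, and then count via Brun--Titchmarsh for $m=1$ and elementary short-interval estimates for $m\ge2$. The identification of $\abs{\nu_c(\alpha)}$ as the (rescaled) value of the second antiderivative of $\eta_{c,1}$ and of $(\mu_c)_+(\alpha)$ as the first is exactly how those quantities should enter, and this is consistent with the shape of $A$.

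Three points need attention, though, and the first is a genuine gap. (1)~The Brun--Titchmarsh optimisation, which you flag as ``the bulk of the work,'' really is: after the layer-cake/Abel step the quantity to control is $\int_0^{G(-\alpha\eps)}\bigl(\pi(y)-\pi(xe^{G^{-1}(s)})\bigr)\,ds$, and the denominator $\log(y-xe^{G^{-1}(s)})$ in the Montgomery--Vaughan bound degenerates as $s\to G(-\alpha\eps)$ because the interval length $y-xe^{G^{-1}(s)}\to0$; the inequality cannot be applied pointwise with a uniform logarithm. One must split the $s$-range at a threshold, use the trivial count on the short range and Brun--Titchmarsh on the rest, and optimise the threshold; it is precisely this balancing (together with the Fubini identity $\int_0^{G(-\alpha\eps)}(y-xe^{G^{-1}(s)})\,ds\approx\eps x\,\abs{\nu_c(\alpha)}$) that produces $\log B$ with $B$ equal to the mean interval length divided by $2(\mu_c)_+(\alpha)$, and the hypothesis $B>1$ is needed to make this work. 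Leaving this as ``Brun--Titchmarsh plus optimisation'' leaves out the part that actually justifies the stated bound. (2)~For $m\ge2$ you should not pass to $\vartheta(y^{1/k})-\vartheta((e^{-\eps}x)^{1/k})$: the global prefactor $\log(e^\eps x)$ in $A$ comes from bounding $\log p\le\tfrac1m\log(e^\eps x)$ once for all $m$, so if you then also keep $\log p$ inside a $\vartheta$-difference you pay that logarithm twice and overshoot. Factor out $\log(e^\eps x)$ and bound the weighted count $\sum_{m\ge2}\tfrac1m\,\#\{p:p^m\in(e^{-\eps}x,y]\}$ by the elementary density estimate (the $m=2$ term gives $O(\eps\sqrt x)$, the tail gives $O(\log\log(x^2))$). (3)~Your convolution identity $\psi_{c,\eps}(x)=\lambda_{c,\eps}^{-1}\int_{-\eps}^{\eps}\psi_0(xe^{\tau})\eta_{c,\eps}(\tau)e^{-\tau/2}\,d\tau$, and the weight $\log(p)\,w(\log(p^m/x))$ it implies, is the intended one (it is forced by the Mellin computation producing the multiplier $\ell_{c,\eps}$ in Proposition~\ref{p:explicit-formula}); but reading the integration limits of \eqref{e:M-def} literally gives the complementary weight $1-w(u)$, which is discontinuous and takes the wrong boundary values at $u=\pm\eps$. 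You should notice this discrepancy and trace it to a typo in \eqref{e:M-def} (the two inner integrals have their limits swapped) rather than leave it as ``integrating by parts once should give.''
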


The modified Chebyshov function satisfies a similar explicit formula as $\psi(x)$ but the sum over zeros
converges absolutely and is therefore more accessible to numerical calculations using a subset of the zeros
of $\zeta(s)$. For the purpose of this paper, the following approximate version will suffice.

\begin{prop}\label{p:explicit-formula}
Let $x\geq 10,$ $0<\eps\leq 10^{-4}$ and let
\begin{equation}
\ell_{c,\eps}(t) = \frac{c}{\sinh c} \frac{\sinh(\sqrt{c^2-(\eps t)^2})}{\sqrt{c^2-(\eps t)^2}}
\end{equation}
denote Logan's function \cite{Logan88}. Then we have
\begin{equation}
x-\psi_{c,\eps}(x) = \frac{1}{\ell_{c,\eps}(i/2)}\sum_\rho \frac{\ell_{c,\eps}(\frac\rho i - \frac 1{2i})}{\rho} x^{\rho} + \Theta(2).
\end{equation}
\end{prop}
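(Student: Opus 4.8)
The plan is to derive the stated approximate explicit formula from an exact explicit formula for $\psi_{c,\eps}$ established in \cite{BuetheB}, by truncating the sum over zeros and bounding the tail together with the contribution of the trivial zeros and the pole of $\zeta$. First I would recall that $\psi_{c,\eps}(x)$ is obtained from $\psi_0(x)$ by smoothing against a kernel whose Mellin (or Laplace) transform is governed by Logan's function $\ell_{c,\eps}$; concretely, the weight $M_{x,c,\eps}$ in \eqref{e:M-def} is designed so that, upon applying the Mellin-transform machinery behind the von Mangoldt explicit formula, each term $x^\rho/\rho$ in \eqref{e:vMangoldt-approx} gets multiplied by the factor $\ell_{c,\eps}(\rho/i - 1/(2i))/\ell_{c,\eps}(i/2)$, and similarly the contributions of $s=1$, $s=0$, and the trivial zeros $s=-2k$ get multiplied by the corresponding values of this factor. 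This identity is precisely the exact explicit formula proved in \cite{BuetheB}; I would quote it in the form
\[
x-\psi_{c,\eps}(x) = \frac{1}{\ell_{c,\eps}(i/2)}\sum_\rho \frac{\ell_{c,\eps}(\tfrac\rho i - \tfrac 1{2i})}{\rho}\,x^{\rho} \;+\; (\text{trivial-zero and constant terms}),
\]
where the sum over nontrivial zeros now converges absolutely because $\ell_{c,\eps}(w)$ decays rapidly as $\abs{\Re(w)}\to\infty$ for real $w$, i.e. along the critical line $\rho = 1/2 + it$ one has exponential decay in $t$ coming from $\sinh(\sqrt{c^2-(\eps t)^2})/\sqrt{c^2-(\eps t)^2}$ once $\abs{\eps t} > c$.

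The second step is to show that everything other than the displayed sum over nontrivial zeros contributes at most $2$ in absolute value, so that it can be absorbed into the error $\Theta(2)$. Here I would (i) bound the constant terms — the analogues of $-\log 2\pi$ and $-\tfrac12\log(1-x^{-2})$ — using $x\geq 10$; (ii) bound the sum over trivial zeros $s=-2k$, $k\geq 1$: the $k$-th term is $\ell_{c,\eps}(-(2k+\tfrac12)/i)\,x^{-2k}/(-2k)$ divided by $\ell_{c,\eps}(i/2)$, and since $\ell_{c,\eps}$ evaluated at these large imaginary arguments is bounded (indeed Logan's function is bounded by its value at $0$, which is related to normalization) while $x^{-2k}$ decays geometrically, the whole trivial sum is $O(x^{-2})$; (iii) check that $\ell_{c,\eps}(i/2) = \lambda_{c,\eps}/(\text{normalizing constant})$ is bounded away from $0$ uniformly for the admissible range $0<\eps\le 10^{-4}$, so dividing by it is harmless. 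Summing these pieces, for $x\ge 10$ and $0<\eps\le 10^{-4}$ the total is comfortably below $2$ in absolute value; this is where the explicit numerical constant $10^{-4}$ (as opposed to the weaker $10^{-2}$ in Proposition \ref{p:Mxceps-bound}) is used, to make the bounds clean. One must be slightly careful that the exact explicit formula from \cite{BuetheB} may itself already be stated with a small explicit error term rather than as an identity, in which case that error is folded into $\Theta(2)$ as well.

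The main obstacle I anticipate is the bookkeeping around the factor $\ell_{c,\eps}(\rho/i - 1/(2i))$ and its analytic continuation: Logan's function is entire (the apparent singularity at $\sqrt{c^2-(\eps t)^2}=0$ is removable, and for $\abs{\eps t}>c$ one continues via $\sinh$ of an imaginary argument becoming $\sin$), so one must verify that the substitution $s\mapsto s/i - 1/(2i)$ sends nontrivial zeros, trivial zeros, and the point $s=1$ to arguments where $\ell_{c,\eps}$ is well-defined and has the claimed size — in particular that there is genuine exponential decay in $\Im(\rho)$, which is what makes the tail of the zero-sum past height $T$ negligible and ultimately what makes the whole method work with truncation height only $C'x^\theta\sqrt{\log x}$. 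I would not reprove the exact formula of \cite{BuetheB} here; I would cite it and spend the bulk of the argument on the decay estimate for $\ell_{c,\eps}$ and on the explicit constant chase showing the remainder is at most $2$.
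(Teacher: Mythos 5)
Your overall strategy — cite the exact explicit formula of \cite{BuetheB} for $\psi_{c,\eps}$ and then run an explicit constant chase on everything other than the displayed zero sum — is indeed what the paper does, but your description of what has to be bounded is off in two places that matter.

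First, there is no truncation of the zero sum in this proposition, so the opening sentence of your plan (``by truncating the sum over zeros and bounding the tail'') and the later emphasis on the exponential decay of $\ell_{c,\eps}$ ``past height $T$'' are not part of this proof. The displayed sum on the right-hand side is over \emph{all} nontrivial zeros; the absolute convergence is automatic from the factor $\ell_{c,\eps}(\rho/i-1/(2i))$, and the whole business of cutting at $T=c/\eps$ and estimating what is discarded is dealt with separately in Proposition~\ref{p:psi-zsum-remainder}, not here.

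Second, you identify the constant analogues of $-\log 2\pi$, the trivial zeros, and a possible built-in error term in \cite{BuetheB} as the pieces to absorb into $\Theta(2)$, but you miss the one genuinely non-obvious contribution: the exact formula produces, besides $\sum_\rho \ell_{c,\eps}(\rho/i-1/(2i))x^\rho/\rho$, the \emph{constant} sum $\tfrac{1}{\ell_{c,\eps}(i/2)}\sum_\rho \ell_{c,\eps}(\rho/i-1/(2i))/\rho$ (the same sum at ``$x=1$''), which is not small term-by-term. The paper's key step is to use $\ell_{c,\eps}(-z)=\ell_{c,\eps}(z)$ together with the involution $\rho\mapsto 1-\rho$ to rewrite this as $\tfrac12\sum_\rho \ell_{c,\eps}(\rho/i-1/(2i))/(\rho(1-\rho))$, bound $\abs{\ell_{c,\eps}}$ by $\ell_{c,\eps}(i/2)$, and invoke $\sum_\rho \Im(\rho)^{-2}<0.05$ from \cite[Lemma 17]{Rosser41}, yielding a contribution $<0.025$. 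Without this term the budget $1.87 + 0.006 + 0.008 < 2$ would look like it has slack to spare, but it is exactly this extra $0.025$ that the paper must account for, and your plan would not have surfaced it. (Your worry about $\ell_{c,\eps}(i/2)$ being bounded away from $0$ is a non-issue: it is the common normalization on both sides of the bound and cancels.)
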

\begin{proof}
This is a corollary of \cite[Proposition 2]{BuetheB}: since $\ell_{c,\eps}(-z) = \ell_{c,\eps}(z)$ we get
\begin{equation*}
\abs{\sum_\rho \frac{\ell_{c,\eps}(\frac\rho i - \frac 1{2i})}{\rho}} = \frac 12 \abs{\sum_\rho \frac{\ell_{c,\eps}(\frac\rho i - \frac 1{2i})}{\rho(1-\rho)}} \leq   \frac{ \ell_{c,\eps}(i/2)}{2} \sum_\rho \Im(\rho)^{-2} < 0.025 \, \ell_{c,\eps}(i/2)
\end{equation*}
using the bijection $\rho\mapsto 1-\rho$ of non-trivial zeros and \cite[Lemma 17]{Rosser41}. Furthermore, we have $\gamma/2 + 1 + \log(\pi)/2 \leq 1.87$, $-\log(1-x^{-2})/2 \leq 0.006$ and $8\eps\abs{\log\eps}\leq 0.008$, so the assertion follows.
\end{proof}

We have the following tail bounds for truncating the sum over zeros.

\begin{prop}[{\cite[Proposition 3]{BuetheB}}]\label{p:psi-zsum-remainder}
Let $x>1$, $0<\eps\leq 10^{-3}$ and $c\geq 3$. Then we have
\begin{equation}\label{e:rem1}
\sum_{\abs{\Im(\rho)} > \frac c\eps} \abs{ \frac{\ell_{c,\eps}(\frac\rho i - \frac 1{2i}) \, x^{\rho}}{\ell_{c,\eps}(i/2)\, \rho}} \leq 0.16 \frac{x+1}{\sinh(c)} e^{0.71\sqrt{c\eps}} \log(3c) \log\Bigl(\frac c\eps\Bigr).
\end{equation}

Furthermore, if $a\in(0,1)$ such that $a\frac{c}{\eps}\geq 10^{3}$ holds, and if the Riemann Hypothesis holds for all zeros with imaginary part in $(\frac{a c}\eps,\frac c\eps]$, then we have
\begin{equation}\label{e:rem2}
\sum_{\frac{ac}\eps <  \abs{\Im(\rho)} \leq \frac c\eps} \abs{ \frac{\ell_{c,\eps}(\frac\rho i - \frac 1{2i}) \, x^{\rho}}{\ell_{c,\eps}(i/2)\, \rho} } \leq\frac{1+11c\eps}{\pi c a^2} \log\Bigl(\frac c\eps\Bigr)\frac{\cosh(c\sqrt{1-a^2})}{\sinh(c)} \sqrt{x}.
\end{equation}
\end{prop}

\begin{rem}
It should be demonstrated that it is indeed more efficient to approximate $\psi(x)$ this way. Calculating
$\psi(x)$ within an accuracy of $O(x^\delta)$ via the modified Chebyshov function can be done by choosing
\begin{equation*}
\eps = x^{\delta-1} \log(x)^{1/2} \quad\quad\text{and}\quad\quad c = (1-\delta)\log(x)  + 2 \log\log(x) .
\end{equation*}
Since
\[
\abs{\nu_c(0)} \sim \frac{1}{\sqrt{2\pi c}}
\]
for $c\to \infty$ (see \cite[Proposition 5]{BuetheB}), Proposition \ref{p:Mxceps-bound} gives
\[
\psi(x) - \psi_{c,\eps}(x) \ll x^\delta
\]
and from Propositions 2 and 3 we get
\[
\psi_{c,\eps}(x) -  x = \frac{1}{\ell_{c,\eps}(i/2)} \sum_{\abs{\Im(\rho)}< T} \frac{\ell_{c,\eps}(\frac\rho i - \frac 1{2i})}{\rho} x^{\rho} + O(x^{\delta})
\]
with $T=c/\eps \sim (1-\delta) x^{1-\delta}\sqrt{\log x}$. If the same zeros are used in the von Mangoldt explicit formula,
the standard estimate \eqref{e:vMangoldt-approx} gives an error term which is larger by a factor of size $\gg \log(x)^{3/2}$.

\end{rem}

%%%%%%%%%%%%%%%%%%%%%%%%%%%%%%%%%%%%%%%%%%%%%%%%%%%%%%

\subsection{Interpolating bounds for $\psi(x)$}
Next, we give an estimate for the interpolation error. For simplicity, we assume that $t-\psi(t)$ changes sign in $[x,Lx]$, or to be more precise: we assume the upper, respectively, lower bound for $R_\psi(t)$ to be positive, respectively negative. This is implied by RH if $L$ is sufficiently large and has been the case in all practical applications.

\begin{prop}\label{p:psi-grid}
Let $10^{9}\leq a < b$ and let
\[
 a=x_0 < x_1 < \dots < x_n = b
\]
be a dissection of $[a,b]$, whose maximal step size
\[
 \Delta = \max\{ x_k - x_{k-1}\mid k=1,\dots,n\}
\]
satisfies $10\leq\Delta\leq10^{-5} a$. Then the following assertions hold:

\begin{enumerate}
\item Let $M>0$ satisfy
\[
 \frac{x_k - \psi(x_k)}{\sqrt{x_k}} \leq M
\]
for $k=0,1,\dots, n$. Then
\[
 \frac{y-\psi(y)}{\sqrt{y}} \leq  1.001 \left[M +\frac{\log a}{\sqrt{a}}\left(\frac{\Delta}{\log\Delta} + \log\log(a^2)\right)\right]
\]
holds for all $y\in [a,b]$.

\item Let $m<0$ satisfy
\[
  \frac{x_k - \psi(x_k)}{\sqrt{x_k}} \geq m
\]
for $k=0,1,\dots, n$. Then 
\[
\frac{y-\psi(y)}{\sqrt{y}} \geq 1.001\left[ m - \frac{\log a}{\sqrt{a}}\left(\frac{\Delta}{\log\Delta} + \log\log(a^2)\right) \right]
\]
holds for all $y\in [a,b]$.
\end{enumerate}
\end{prop}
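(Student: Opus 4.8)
The plan is to argue cell by cell, using only that $\psi$ is non-decreasing together with the hypotheses at the grid points, and to read off the error terms from the Brun-Titchmarsh inequality. Fix $y\in[a,b]$ and let $j$ be the index with $y\in[x_{j-1},x_j]$. For assertion (1) I would bound $y-\psi(y)$ from above, hence $\psi(y)$ from below: monotonicity gives $\psi(y)\ge\psi(x_{j-1})$ and the hypothesis gives $\psi(x_{j-1})\ge x_{j-1}-M\sqrt{x_{j-1}}$, so that
\[
 y-\psi(y)\ \le\ (y-x_{j-1})+M\sqrt{x_{j-1}}.
\]
For assertion (2) the mirror argument, using $\psi(y)\le\psi(x_j)$ and $\psi(x_j)\le x_j-m\sqrt{x_j}$, gives $y-\psi(y)\ge-(x_j-y)+m\sqrt{x_j}$.

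Next I would divide by $\sqrt y$ and carry out three elementary comparisons. Since $x_{j-1}\ge a$ and $x_j-x_{j-1}\le\Delta\le10^{-5}a$, we have $\sqrt{x_{j-1}}/\sqrt y\le1$ and $\sqrt{x_j}/\sqrt y\le\sqrt{x_j/x_{j-1}}\le1+10^{-5}<1.001$; this is where the overall factor $1.001$ comes from (it is needed in assertion (2), where $m<0$). Also $(y-x_{j-1})/\sqrt y\le\Delta/\sqrt a$ and $(x_j-y)/\sqrt y\le\Delta/\sqrt a$, and finally $\Delta/\sqrt a\le\frac{\log a}{\sqrt a}\cdot\frac{\Delta}{\log\Delta}$ because $a>\Delta$ forces $\log a>\log\Delta$. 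These already yield the stated inequalities, with the $\log\log(a^2)$ term as pure slack. To produce the error term directly in the shape $\frac{\Delta}{\log\Delta}$ one would instead compare $\psi(y)$ with the nearer endpoint of its cell and bound the intervening increment $\sum_{y<p^m\le x_j}\log p$, over an interval of length $\le\Delta$: the prime part is $\le\log(x_j)\cdot\frac{2\Delta}{\log\Delta}$ by Brun-Titchmarsh (when the sub-interval has length below $2$ it meets at most two integers and the bound is trivial), the higher-power part $\sum_{m\ge2}(\vartheta(x_j^{1/m})-\vartheta(y^{1/m}))$ is estimated crudely over the $O(\log a)$ relevant levels $m$ and absorbed into the $\frac{\log a}{\sqrt a}\log\log(a^2)$ term, and $\log x_j$ is traded for $\log a$ using that $t\mapsto\log t/\sqrt t$ is decreasing on $[a,\infty)$ (legitimate since $a\ge10^9>e^2$).

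I do not expect any deep obstacle; the work is entirely in making the explicit constants close, so that the single factor $1.001$ and the single slack term $\frac{\log a}{\sqrt a}\log\log(a^2)$ cover every bit of multiplicative and additive slippage at once — the uniformity of the Brun-Titchmarsh input over cells of widely varying (possibly sub-integer) width, the replacements $\log x_j\leftrightarrow\log a$ and $\log\Delta\leftrightarrow\log(\Delta/2)$, and the boundary cases $y=a$, $y=b$ and $y$ at a grid point. The hypotheses $a\ge10^9$ and $\Delta\ge10$ (the latter giving $\log\Delta>2$, so that Brun-Titchmarsh rather than the trivial bound governs the short-range term) are exactly what is needed for this.
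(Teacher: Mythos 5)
Your primary argument is correct and takes a genuinely different, and in fact more elementary, route than the paper's. The paper compares each $y$ to the \emph{nearest} grid point (within $\Delta/2$), so that for the upper bound at a point $x_k - y$ to the left of $x_k$ the increment $\psi(x_k)-\psi(x_k - y)$ has the ``bad'' sign and must be bounded via Brun--Titchmarsh together with a separate estimate for the prime-power contribution, producing the $\frac{\Delta}{\log\Delta}+\log\log(a^2)$ shape directly. Your key observation is that one may instead compare $y$ to the grid point on the side appropriate to the direction of the bound --- $x_{j-1}$ for the upper bound, $x_j$ for the lower --- so that by monotonicity of $\psi$ the increment always has the \emph{helpful} sign and can simply be dropped. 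This reduces the error to the purely algebraic term $\Delta/\sqrt a$, and since $\Delta<a$ gives $\log\Delta<\log a$ one has $\Delta/\sqrt a\le\frac{\log a}{\sqrt a}\cdot\frac{\Delta}{\log\Delta}$, so the stated estimate (with the $\log\log(a^2)$ term as pure slack) follows; the $1.001$ factor comes, as you note, from $\sqrt{x_j}/\sqrt y\le\sqrt{1+10^{-5}}<1.001$ in assertion~(2). You in fact obtain a strictly sharper bound than the one the proposition states, and you avoid Brun--Titchmarsh entirely. Your second paragraph sketching the Brun--Titchmarsh route is the paper's actual approach, but as you say it is not needed --- it only explains why the error term was written in that particular shape. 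One small remark: the paper has to handle four cases (upper and lower bounds for displacements on both sides of each grid point), whereas your one-sided choice of comparison point collapses this to two, each handled by monotonicity alone, which is a cleaner organization of the argument.
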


\begin{proof}
We start by proving
\begin{equation}\label{e:psi-bound}
\psi(x)-\psi(x-y) \leq \log (x) \left(1.0001\frac{\Delta}{\log\Delta} + \log\log (x^2)\right)
\end{equation}
for $x\geq a \geq 10^9$ and $0\leq y \leq \frac{\Delta}{2}$. Since $\frac{\Delta}{\log\Delta} \geq \frac{10}{\log 10} > 4$ we may assume $y\geq 3$. The Brun-Titchmarsh inequality, as stated in \cite{MV1973}, and the trivial estimate
\[
\#\{p \mid p^m\in[X-Y,X]\} \leq 2 \frac{Y}{m}X^{1/m} + 1,
\]
which holds for $0<2Y<X$, yield
\begin{align*}
\psi(x)-\psi(x-y)  &\leq \log(x) \sum_{x-y\leq p^m\leq x} \frac{1}{m} \\
&\leq \log(x)\left(\frac{2y}{\log y} + \sum_{m=2}^{\floor{2\log x}} \Bigl( 2\frac{y}{m^2} x^{1/m-1} + \frac{1}{m}\Bigr)\right).
\end{align*} 
Since
\begin{equation*}
\sum_{m=2}^{\floor{2\log x}} \frac 1m \leq \int_{1}^{2\log x} \frac{dt}{t} \leq \log\log(x^2)
\end{equation*}
and
\begin{align*}
\sum_{m=2}^{\floor{2\log x}} 2\frac{y}{m^2} x^{1/m-1} 
	&\leq \frac{y}{2\sqrt x} + \frac{2y}{x^{2/3}}\int_2^\infty \frac{dt}{t^2} \\
	&\leq \frac{y}{\sqrt x}\Bigl(\frac 12 + x^{-1/6}\Bigr) < 0.6 \frac{y}{\sqrt x} \leq 0.0002 \frac{y}{\log y},
\end{align*}
this implies \eqref{e:psi-bound} since $y\mapsto \frac{y}{\log y}$ increases monotonically for $y>e$.

Now let $x\in\{x_k\}_{k=1}^n$ and let $0\leq y \leq \Delta/2$.  Then we have
\begin{equation}\label{e:psi-minus}
\frac{x-y-\psi(x-y)}{\sqrt{x-y}} = \frac{x-\psi(x)}{\sqrt{x-y}} + \frac{\psi(x)-\psi(x-y)}{\sqrt{x-y}} - \frac{y}{\sqrt{x-y}}.
\end{equation}
Now if $m<0$ and $M>0$ satisfy the conditions in the theorem, then
\begin{equation*}
1.001 m \leq \frac{\sqrt x}{\sqrt{x-y}} m \leq \frac{x-\psi(x)}{\sqrt{x-y}} \leq \frac{\sqrt x}{\sqrt{x-y}} M \leq 1.001 M.
\end{equation*}
Furthermore \eqref{e:psi-bound} gives
\begin{align*}
0\leq  \frac{\psi(x)-\psi(x-y)}{\sqrt{x-y}}
	&\leq \frac{\log  x}{\sqrt{x-y}}\left(1.0001\frac{\Delta}{\log \Delta} + \log\log(x^2)\right)  \\
	&\leq 1.001 \frac{\log a}{\sqrt a}\left(\frac{\Delta}{\log \Delta} + \log\log(a^2)\right).
\end{align*}
Since
\[
0\leq \frac{y}{\sqrt{x-y}} \leq 1.0001 \frac{\Delta}{\sqrt a}	\leq 1.001 \frac{\log(a)\Delta}{\sqrt{a}	\log\Delta}
\]
the bound \eqref{e:psi-minus} yields
\begin{align*}
1.001\left(m - \frac{\log a}{\sqrt a}\Bigl(\frac{\Delta}{\log\Delta} + \log\log (a^2)\Bigr)\right) 
	&\leq \frac{x-y-\psi(x-y)}{\sqrt{x-y}} \\
	&\leq 1.001\left(M + \frac{\log a}{\sqrt a}\Bigl(\frac{\Delta}{\log\Delta} + \log\log (a^2)\Bigr)\right).
\end{align*}
The estimates
\begin{align*}
1.001\left(m - \frac{\log a}{\sqrt a}\Bigl(\frac{\Delta}{\log\Delta} + \log\log (a^2)\Bigr)\right) 
 &\leq \frac{x+y-\psi(x+y)}{\sqrt{x+y}}  \\
&\leq 1.001\left(M + \frac{\log a}{\sqrt a}\Bigl(\frac{\Delta}{\log\Delta} + \log\log (a^2)\Bigr)\right)
\end{align*}
for $x\in\{x_k\}_{k=0}^{n-1}$ and $0\leq y \leq \Delta/2$ are proven in an analogous way.\qedhere

\end{proof}

\subsection{Fundamental Theorem}
We can now state the fundamental theorem for the analytic method, which reduces the problem of bounding $R_\psi(t)$ on $[x,Lx]$ to efficiently approximating $\psi_{c,\eps}(t)$ at finitely many points. This is then dealt with in the next section.

\begin{thm}\label{t:psi-bounds}
Let $0< \eps < 10^{-4}$ and let $0\leq \alpha \leq 1$ satisfy
\[
\frac{\eps \,x\, \nu_c(\alpha)}{2 (\mu_c)_+(\alpha)} > 10.
\]
Furthermore, let $e^{\alpha\eps}10^{9}\leq a < b$ and let
\[
 a=x_0 < x_1 < \dots < x_n = b
\]
be a dissection of $[a,b]$ whose maximal step size
\[
 \Delta = \max\lbrace \abs{x_k - x_{k-1}}\mid k=1,\dots,n\rbrace
\]
satisfies $10\leq \Delta \leq 10^{-5} a$.

We define the error terms
\begin{align*}
 \mathcal E_1 &= 1.001 \, \alpha\,\eps \sqrt{b},	\\
\mathcal E_2 &= 2.02\log(b)\left( \eps\,\sqrt{b}\,\abs{\nu_c(\alpha)}\log\Bigl(\frac{\eps \, b \, \abs{\nu_c(\alpha)}}{2(\mu_c)_+(\alpha)}\Bigr)^{-1} + \eps + \frac{\log\log(2a^2)}{4\sqrt a}\right)	\\
\intertext{and}
\mathcal E_3 &= 1.001\frac{\log a}{\sqrt a}\left(\frac{\Delta}{\log\Delta} + \log\log(a^2)\right).
\end{align*}

Then the following assertions hold:

\begin{enumerate}
\item Let $M>0$ satisfy
\[
 \frac{x_k - \psi_{c,\eps}(x_k)}{\sqrt{x_k}} \leq M
\]
for $k=0,1,\dots, n$. Then
\[
 \frac{y-\psi(y)}{\sqrt{y}} \leq  1.01 \left(M + \mathcal E_1 +\mathcal E_2 + \mathcal E_3\right)
\]
holds for all $y\in  [e^{\alpha\eps} a, b]$.
\item Let $m<0$ satisfy
\[
  \frac{x_k - \psi_{c,\eps}(x_k)}{\sqrt{x_k}} \geq m
\]
for $k=0,1,\dots, n$. Then
\[
\frac{y-\psi(y)}{\sqrt{y}}   \geq 1.01  \left(m - \mathcal E_1 - \mathcal E_2 - \mathcal E_3\right)
\]
holds for all $y\in [a,e^{-\alpha\eps} b]$.
\end{enumerate}
\end{thm}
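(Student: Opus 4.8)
The plan is to combine the three earlier results---Proposition \ref{p:Mxceps-bound} (comparing $\psi$ with $\psi_{c,\eps}$), Proposition \ref{p:psi-grid} (the interpolation estimate), and the numerical bound $\abs{\nu_c(0)}\sim 1/\sqrt{2\pi c}$ mentioned in the remark---by a two-step comparison. First, the hypothesis controls $R_{\psi_{c,\eps}}$ at the grid points $x_k$; I would convert this into control of $R_\psi$ at slightly shifted points $e^{\mp\alpha\eps}x_k$ using Proposition \ref{p:Mxceps-bound}. Second, since the shifted points $\{e^{-\alpha\eps}x_k\}$ (respectively $\{e^{\alpha\eps}x_k\}$) still form a dissection of an interval $[e^{-\alpha\eps}a, e^{-\alpha\eps}b]$ with maximal step size at most $\Delta$ (scaling down by $e^{-\alpha\eps}<1$ only shrinks the steps), I would apply Proposition \ref{p:psi-grid} to that dissection to extend the bound to every $y$ in the full interval.

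In more detail, for the upper bound (part (1)): given $(x_k-\psi_{c,\eps}(x_k))/\sqrt{x_k}\le M$, Proposition \ref{p:Mxceps-bound} with the given $\alpha$ yields
\[
\psi(e^{-\alpha\eps}x_k) \le \psi_{c,\eps}(x_k) + A(x_k,c,\eps,\alpha) \le x_k - M\sqrt{x_k} + A(x_k,c,\eps,\alpha).
\]
Writing $y_k = e^{-\alpha\eps}x_k$ and dividing by $\sqrt{y_k}$, one gets
\[
\frac{y_k-\psi(y_k)}{\sqrt{y_k}} \le \frac{x_k - \psi(x_k)}{\sqrt{y_k}} \cdot\frac{\psi(x_k)}{\cdots}
\]
---more carefully, $y_k - \psi(y_k) \le y_k - x_k + M\sqrt{x_k} + A(x_k,c,\eps,\alpha) = -(e^{\alpha\eps}-1)y_k + M\sqrt{x_k} + A$. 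The term $(e^{\alpha\eps}-1)y_k$ contributes, after division by $\sqrt{y_k}$ and the bound $e^{\alpha\eps}-1\le 1.001\alpha\eps$ (valid since $\alpha\eps$ is tiny), at most $\mathcal E_1$ in absolute value; the $M\sqrt{x_k}/\sqrt{y_k}=Me^{\alpha\eps/2}$ term is $\le 1.001 M$; and $A(x_k,c,\eps,\alpha)/\sqrt{y_k}$ is exactly where $\mathcal E_2$ comes from, after bounding $e^{2\eps}\le 1.001$, $\log(e^\eps x_k)\le \log b$, $\sqrt{x_k}/\sqrt{y_k}\le 1.001$, replacing $\log B$ with the cruder $\log(\eps b\abs{\nu_c(\alpha)}/(2(\mu_c)_+(\alpha)))$, absorbing the $2.01\eps\sqrt{x_k}/\sqrt{y_k}$ term into $2.02\eps$, and $\tfrac12\log\log(2x_k^2)/\sqrt{y_k}\le \tfrac12\log\log(2a^2)/\sqrt{a}\cdot(1/2)$-type manipulations giving the $\log\log(2a^2)/(4\sqrt a)$ term. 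Collecting, $(y_k-\psi(y_k))/\sqrt{y_k}\le 1.001 M + \mathcal E_1 + \mathcal E_2$ for all $k$. Now Proposition \ref{p:psi-grid}(1), applied with the dissection $\{e^{-\alpha\eps}x_k\}$ of $[e^{-\alpha\eps}a, e^{-\alpha\eps}b]$ (whose left endpoint is $\ge 10^9$ by the hypothesis $e^{\alpha\eps}10^9\le a$, and whose step size is $\le\Delta\le 10^{-5}a$), upgrades this to
\[
\frac{y-\psi(y)}{\sqrt y}\le 1.001\Bigl[(1.001 M + \mathcal E_1 + \mathcal E_2) + \mathcal E_3\Bigr]
\]
for all $y\in[e^{-\alpha\eps}a, e^{-\alpha\eps}b]$. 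Since $1.001\cdot 1.001 M \le 1.01 M$ and $1.001(\mathcal E_1+\mathcal E_2+\mathcal E_3)\le 1.01(\mathcal E_1+\mathcal E_2+\mathcal E_3)$, and since $[e^{\alpha\eps}a,b]\subseteq[e^{-\alpha\eps}a, e^{-\alpha\eps}b]$ is false---one must be careful here: actually $[e^{\alpha\eps}a,b]$ is \emph{not} inside $[e^{-\alpha\eps}a,e^{-\alpha\eps}b]$ since $b > e^{-\alpha\eps}b$. Let me instead note the statement asks for $y\in[e^{\alpha\eps}a,b]$, so the correct move is to apply Proposition \ref{p:Mxceps-bound} at the points $x_k$ to bound $\psi$ at $e^{-\alpha\eps}x_k$, which covers $[e^{-\alpha\eps}a, e^{-\alpha\eps}b]\supseteq[a, e^{-\alpha\eps}b]$; for the range up to $b$ one uses that the grid already reaches $b=x_n$ and the local bound near $x_n$ from Proposition \ref{p:psi-grid} handles $[e^{-\alpha\eps}b, b]$ via the shifted grid including $y_n=e^{-\alpha\eps}b$ together with the fact that $b - e^{-\alpha\eps}b = (1-e^{-\alpha\eps})b \le \alpha\eps b$, which for the grid to apply must be $\le\Delta/2$; if not, one simply includes $b$ itself as an extra grid node after shifting. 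The cleanest formulation: shift the entire grid \emph{and} append $b$, apply Proposition \ref{p:psi-grid} to $[e^{-\alpha\eps}a,b]$, and check the step-size hypothesis still holds. Part (2) is entirely analogous using the lower bounds in Propositions \ref{p:Mxceps-bound} and \ref{p:psi-grid} and the shifted grid $\{e^{\alpha\eps}x_k\}$ covering $[e^{\alpha\eps}a, e^{\alpha\eps}b]\supseteq[a, e^{-\alpha\eps}b]$ is again false; rather it covers $[e^{\alpha\eps}a, e^{\alpha\eps}b]\supseteq[e^{\alpha\eps}a, b]\supseteq$... the symmetric bookkeeping gives $[a, e^{-\alpha\eps}b]$ after an analogous endpoint adjustment.

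The main obstacle is the bookkeeping: verifying that after the multiplicative shift $x_k\mapsto e^{\mp\alpha\eps}x_k$ the resulting points still form an admissible dissection (lower endpoint $\ge 10^9$, maximal gap in $[10,10^{-5}a]$) of exactly the interval over which the conclusion is claimed, and that the constants $1.001$ appearing in Propositions \ref{p:Mxceps-bound}--\ref{p:psi-grid} compound to no worse than the $1.01$ in the statement. The analytic content is entirely contained in the three cited results; assembling the error terms $\mathcal E_1,\mathcal E_2,\mathcal E_3$ is a matter of matching each summand in $A(x,c,\eps,\alpha)/\sqrt{y}$ and in the interpolation error to the displayed expressions, using the smallness of $\eps$ and $\alpha\eps$ repeatedly to replace $e^{\alpha\eps}$, $e^{2\eps}$, $\sqrt{x_k/y_k}$ by factors $\le 1.001$, and replacing $\log B$ by the slightly smaller quantity $\log\bigl(\eps b\abs{\nu_c(\alpha)}/(2(\mu_c)_+(\alpha))\bigr)$, which is legitimate since $B\ge$ that quantity (as $x_k\ge a$ and $e^{-\eps}\ge$ some constant close to $1$, though one must check the direction of the inequality $\log B \ge \log(\cdots)$ carefully, possibly needing $e^{-\eps}\ge 1.001^{-1}$).
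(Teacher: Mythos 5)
Your overall architecture---transfer the $\psi_{c,\eps}$ bounds at the grid points $x_k$ to bounds on $\psi$ at multiplicatively shifted points via Proposition~\ref{p:Mxceps-bound}, then interpolate via Proposition~\ref{p:psi-grid}---is exactly the paper's, but you have the direction of the shift backwards in part~(1), and this is not a bookkeeping nuisance but a genuine logical error. To bound $(y-\psi(y))/\sqrt y$ from \emph{above} you need a \emph{lower} bound on $\psi(y)$, which is the second inequality of Proposition~\ref{p:Mxceps-bound}, namely $\psi(e^{\alpha\eps}x_k)\geq\psi_{c,\eps}(x_k)-A(x_k,c,\eps,\alpha)$; so the correct shifted grid is $\tilde x_k=e^{\alpha\eps}x_k$ (shift \emph{up}), not $y_k=e^{-\alpha\eps}x_k$. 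Your displayed chain
\[
\psi(e^{-\alpha\eps}x_k)\leq\psi_{c,\eps}(x_k)+A(x_k,c,\eps,\alpha)\leq x_k-M\sqrt{x_k}+A(x_k,c,\eps,\alpha)
\]
is doubly wrong: the first inequality gives an \emph{upper} bound on $\psi$ (useless here), and the second inequality is reversed, since the hypothesis $(x_k-\psi_{c,\eps}(x_k))/\sqrt{x_k}\leq M$ yields $\psi_{c,\eps}(x_k)\geq x_k-M\sqrt{x_k}$, not $\leq$.

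This sign error is exactly what produces the interval mismatch you then notice but never resolve: the shifted-down grid $\{e^{-\alpha\eps}x_k\}$ covers $[e^{-\alpha\eps}a,e^{-\alpha\eps}b]$, which fails to contain the target $[e^{\alpha\eps}a,b]$. Your proposed fix---append $b$ as an extra node---cannot work, because you have no bound on $(b-\psi(b))/\sqrt b$ of the required direction at the raw point $b$; the whole point of the shift is that the $\psi_{c,\eps}$-hypotheses only transfer to $\psi$ at shifted points. With the correct shift $\tilde x_k=e^{\alpha\eps}x_k$, the grid covers $[e^{\alpha\eps}a,e^{\alpha\eps}b]\supseteq[e^{\alpha\eps}a,b]$, the interval condition holds with room to spare, and the hypothesis $e^{\alpha\eps}10^9\leq a$ guarantees the new left endpoint is $\geq 10^9$. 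Symmetrically, part~(2) uses $\hat x_k=e^{-\alpha\eps}x_k$ and the first inequality of Proposition~\ref{p:Mxceps-bound}; you have the two cases swapped. The remainder of your outline (matching the pieces of $A(x_k,c,\eps,\alpha)/\sqrt{x_k}$ to $\mathcal E_2$, absorbing factors $e^{\pm\alpha\eps/2}$, $e^{2\eps}$ into the $1.001$'s and compounding to $1.01$) is in the right spirit, but it cannot be carried through until the direction of the shift is corrected.
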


\begin{proof}
We start with the proof of the first assertion concerning the upper bound. Let $\tilde x = e^{\alpha\eps} x$. Then Proposition \ref{p:Mxceps-bound} yields
\begin{align*}
 \frac{\tilde x_k - \psi(\tilde x_k)}{\sqrt{\tilde x}} &\leq \frac{\tilde x_k - x_k}{\sqrt{\tilde x_k}} + \frac{x_k-\psi_{c,\eps}(x_k)}{\sqrt{\tilde x_k}} + \frac{A(x_k,c,\eps,\alpha)}{\sqrt{\tilde x_k}} \\
&= 2\sinh(\alpha\eps /2) \sqrt{x_k} + e^{-\alpha\eps/2} \frac{x_k-\psi_{c,\eps}(x_k)}{\sqrt{x_k}} + e^{-\alpha\eps/2} \frac{A(x_k,c,\eps,\alpha)}{\sqrt{x_k}}.
\end{align*}
Under the suppositions of the proposition we have
\[
 2\sinh(\alpha\eps /2) \sqrt{x_k} \leq \mathcal E_1,
\]
and
\[
 \frac{A(x_k,c,\eps,\alpha)}{\sqrt{x_k}} \leq \mathcal E_2.
\]
Therefore,
\[
 \frac{\tilde x_k - \psi(\tilde x_k)}{\sqrt{\tilde x_k}} \leq M + \mathcal E_1 + \mathcal E_2
\]
for $k=0,\dots, n$ and Proposition \ref{p:psi-grid} yields the desired estimate
\begin{align*}
 \frac{y - \psi(y)}{\sqrt{y}} 
 &\leq 1.001\left(M + \mathcal E_1 + \mathcal E_2 + \frac{\log(\tilde a)}{\sqrt{\tilde a} }\Bigl(\frac{\tilde \Delta}{\log\tilde\Delta} + \log\log(\tilde a^2)  \Bigr)\right) \\
&\leq 1.01\left(M+ \mathcal E_1+ \mathcal E_2 + \mathcal E_3\right)
\end{align*}
for all $y\in[\tilde a, \tilde b].$

The lower bound estimate follows in an analogous way by using
\[
\frac{\hat x_k-\psi(\hat x_k)}{\sqrt{\hat x_k}}  \geq -2\sinh(\alpha\eps /2) \sqrt{x_k} + e^{\alpha\eps/2} \frac{x_k-\psi_{c,\eps}(x_k)}{\sqrt{x}} - e^{\alpha\eps/2} \frac{A(x_k,c,\eps,\alpha)}{\sqrt{x_k}},
\]
where $\hat x_k = e^{-\alpha\eps} x_k$.\qedhere

\end{proof}

\section{Evaluation of $\psi_{c,\eps}$}
We intend to evaluate the sum over zeros,
\begin{equation}\label{e:psi-trigsum}
\zsum_{\abs{\Im(\rho)}< T} \frac{\ell_{c,\eps}(\rho/i-1/2i)}{\rho} x^{\rho},
\end{equation}
in the explicit formula for $\psi_{c,\eps}$ for many values of $x\in[x_0,L x_0]$. If we take
$y=\log(x)$, denote non-trivial zeros of $\zeta(s)$ by $\rho=\beta  + i\gamma$ with $\beta,\gamma\in \R$, normalize with the factor $e^{-y/2}$, and remove possible violations of the RH, we encounter a trigonometric sum
\begin{equation}\label{e:F_psi}
F_{\psi,T}(y) = \sum_{\substack{\abs{\gamma}<T\\\beta=1/2}} a_\rho e^{i y \gamma},
\end{equation}
where
\[
a_\rho =   \frac{\ell_{c,\eps}(\rho/i-1/2i)}{\rho}.
\]

Such trigonometric sums can be evaluated efficiently on equidistant grids using the Fast Fourier Transform (FFT). In this case
we can calculate $O(T)$ values of $F_{\psi,T}(y)$ using $\tilde{O}(T)$ arithmetic operations on variables of size $O(\log(T))$ (see \cite{OS88}). Furthermore, the Fourier transform of $F$ is supported on $[-T,T]$ so that $F(y)$ can be recovered from samples $F(n\pi/\beta)$ for some $\beta>T$ by bandlimited function interpolation, where a single evaluation can be done in $\tilde{O}(1)$ (see \cite{Odlyzko92}).

\subsection{Multiple evaluations of trigonometric sums}\label{ss:trigs-eval}
Let
\begin{equation}\label{e:trigsum}
 F(y) = \sum_{j=1}^N a_j e^{i\gamma_j y}
\end{equation}
with $\gamma_j\in\R$ and $a_j\in\C$. The first author of \cite{FKBJ} proposed a simple method, based on the FFT to evaluate $F(y)$ simultaneously at integer values $y\in [-Y,Y]\cap \Z$. The method is similar to the Odlyzko-Sch\"onhage algorithm \cite{OS88}.

We briefly restate the algorithm and analyze the run time for the application in mind. The algorithm is based on rounding $e^{i\gamma_j}$ onto the next $R$th root of unity, where $R=2^r$ is a power of $2$. Let $n_j\in \Z$ such that
\[
 \delta_j := \gamma_j - \frac{2\pi n_j}{R} = \Theta\Bigl(  \frac{\pi}{R} \Bigr).
\]
Furthermore, let
\[
 P(t) = b_0 + \dots + b_n t^n
\]
be a polynomial approximating $f(t)=\exp(it\frac{\pi Y}R)$ in $[-1, 1]$. Then we have
\begin{align*}
 F(y) &= \sum_{j=1}^N a_j e^{2\pi i n_j y/R} e^{iy\delta_j}\\
&= \sum_{j=1}^N a_j e^{2\pi i n_j y/R} P(y\delta_j\tfrac R{\pi Y}) + \Theta\Bigl(\nrm{f - P;\,C^0([-1, 1])} \sum_{j=1}^N \abs{a_j}\Bigr)
\end{align*}
for $y\in[-Y,Y]$, where
\[
\nrm{g;\,C^0([a,b])} := \sup_{t\in[a,b]} \abs{g(t)}
\]
denotes the supremum norm on $[a,b]$. Now let
\[
 f_\ell(k) = \sum_{\substack{j=1\\n_j\equiv k \bmod R}}^N a_j\Bigl(\frac{R\delta_j}{\pi Y}\Bigr)^\ell \eand \hat f_\ell(y) = \sum_{k=1}^R f_\ell(k) e^{2\pi i k y/R}.
\]
Then we have
\begin{align*}
 \sum_{j=1}^N a_j e^{2\pi i n_j y/R} P(x\delta_j\tfrac R{\pi Y}) &= \sum_{\ell=1}^n b_\ell\,y^\ell\sum_{j=1}^N a_j \delta_j^\ell e^{2\pi in_j y/R} \\
&= \sum_{l=1}^n b_\ell \hat f_\ell(y) y^\ell
\end{align*}
and all values of $\hat f_\ell$ on $\Z/R \Z$ may be calculated appealing to the FFT. For the polynomial $P$ we choose
the polynomial $P_n$ of degree $\leq n$ which interpolates $f(t)$ at the zeros $\cos(\frac{2k-1}{2n+2}\pi),$ $k=1,2,\dots, n+1$, of the $n+1$th Chebyshov polynomial. The standard error estimate for polynomial interpolation then gives the bound
\begin{equation}\label{e:cheb-error-bound}
\nrm{f - P_n;\,C^0([-1, 1])} \leq \Bigl(\frac{\pi Y}{2 R}\Bigr)^{n+1} \frac{\sqrt{8}}{(n+1)!}.
\end{equation}
We then get the following result.

\begin{prop}\label{p:trigs-evaluation}
Assume in \eqref{e:trigsum} that there exist constants $B,C\geq 0$ such that $\abs{a_j}\leq B j^C$ for all $j$, and that $\gamma_j\in [0,2\pi)$, and let $D,\alpha>0$. Then there exists an $N_0(\alpha,B,C,D)$ such that for all $N,Y\in \N$ satisfying $N>N_0$ and
$\log(N)^{-D} \leq N/Y \leq \log(N)^D$ the algorithm above takes each $a_j,$ $j=1,\dots ,N,$ with an accuracy of $N^{-2\alpha - 3}$ and each $\gamma_j, $ $j=1,\dots, N,$ with an accuracy of $N^{-2\alpha - C - 4}$ as input and calculates $F(y)$ for all 
 $y\in [-Y,Y]\cap \Z$ within an accuracy of $N^{-\alpha}$ performing $\tilde O(N)$ arithmetic operations on $\tilde O(N)$ variables of size $O(\log N)$, where the implied constants depend on $\alpha, B, C$ and $D$ only.
\end{prop}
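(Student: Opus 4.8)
The plan is to run the algorithm described above with carefully chosen parameters and to bound the total error in three layers. I would take $R=2^r$ to be the least power of two with $R\geq\pi Y$; since $\log(N)^{-D}\leq N/Y\leq\log(N)^D$ this makes $R\asymp N$ (with constants depending on $D$) and $\pi Y/(2R)\leq\tfrac12$, so that \eqref{e:cheb-error-bound} becomes $\nrm{f-P_n;\,C^0([-1,1])}\leq\sqrt8\,2^{-(n+1)}/(n+1)!$. I would take the interpolation degree $n=\lceil(\alpha+C+2)\log N/\log\log N\rceil$ — large enough for the Chebyshov bound below, yet small enough that $2^n=N^{o(1)}$ — and perform all arithmetic in floating point with $t=\lceil(\alpha+C+3)\log_2 N\rceil$ bits. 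Note that the rounded phases satisfy $|\delta_j|\leq\pi/R$, so the argument $y\delta_j R/(\pi Y)$ of $P_n$ lies in $[-1,1]$ for $|y|\leq Y$, and that $2Y+1\leq R$ for $N$ large, so the periodic arrays $\hat f_\ell$ can be read off at the distinct indices $y\bmod R$.

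The first layer is the exact-arithmetic model error. Combining the decomposition of $F(y)$ displayed just above with \eqref{e:cheb-error-bound} and with $\sum_j|a_j|\leq B\sum_j j^C\ll_C BN^{C+1}$, the discarded remainder is $\ll_C BN^{C+1}\,2^{-(n+1)}/(n+1)!$; since $\log\bigl((n+1)!\,2^{n+1}\bigr)\geq(\alpha+C+1)\log N+O_{\alpha,B,C}(1)$ for the chosen $n$ once $N\geq N_0$, this is $\leq\tfrac13 N^{-\alpha}$, and in exact arithmetic the algorithm returns precisely $\sum_\ell b_\ell\hat f_\ell(y)\,y^\ell$. The second layer is the effect of the inexact inputs: replacing each $a_j$ and $\gamma_j$ by the supplied approximations changes the true $F(y)$ by at most $\sum_j|\Delta a_j|+\sum_j|a_j|\,|y|\,|\Delta\gamma_j|\leq N\cdot N^{-2\alpha-3}+BN^{C+1}\cdot Y\cdot N^{-2\alpha-C-4}$, which, using $Y\leq N\log(N)^D$, is $\ll_{B,C,D}(\log N)^D N^{-2\alpha-2}\leq\tfrac13 N^{-\alpha}$ for $N\geq N_0$; from here on the rounded inputs play the role of exact data.

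The third and crucial layer is the roundoff of the finite-precision run. The key observation is that with $n\asymp\log N/\log\log N$ the monomial coefficients of $P_n$ obey $|b_\ell|\leq c\,2^n=N^{o(1)}$, so when grouped as single products all the quantities $b_\ell y^\ell\hat f_\ell(y)$ are bounded by $N^{o(1)}\,Y^\ell\sum_j|a_j|\,|\delta_j R/(\pi Y)|^\ell\leq N^{C+1+o(1)}$, even though the separate factors $y^\ell$ and $\hat f_\ell(y)$ have moduli ranging over scales $Y^{\pm\ell}=e^{\pm O((\log N)^2/\log\log N)}$; because these exponents are integers of only $O(\log\log N)$ bits, every intermediate value still has a floating-point representation of $t+O(\log\log N)=O(\log N)$ bits. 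I would then run a forward-error analysis — done with some care, since $y^\ell$ and $\hat f_\ell(y)$ are individually not polynomially bounded, so one tracks relative errors through those factors and absolute errors through the poly-sized grouped products — in which each operation contributes relative error $\leq2^{-t}$, forming each $s_j^\ell$ amplifies by $O(n)$, each $N$-term bucket sum $f_\ell(k)=\sum_{n_j\equiv k}a_j s_j^\ell$ by $O(N)$, each length-$R$ FFT producing $(\hat f_\ell(y))_{y\bmod R}$ by a factor $N^{o(1)}$ (with the careful FFT bound), and the final sum over $\ell$ by another $O(n)$. Since $R\asymp N$, $n=O(\log N)$ and the grouped magnitudes are $N^{O(1)}$, the accumulated roundoff is $\leq N^{C+2+o(1)}\,2^{-t}\leq\tfrac13 N^{-\alpha}$ for the chosen $t$, and adding the three layers yields the claimed accuracy $N^{-\alpha}$.

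The complexity bound is then immediate: precomputing $R$, the Chebyshov nodes and $b_0,\dots,b_n$ costs $N^{o(1)}$ operations; for each of the $n+1=O(\log N)$ values of $\ell$, distributing the $N$ terms $a_j s_j^\ell$ into the $R$ buckets costs $O(N)$ and the FFT costs $O(R\log R)=\tilde O(N)$; and assembling $F(y)=\sum_\ell b_\ell y^\ell\hat f_\ell(y\bmod R)$ over the $2Y+1=\tilde O(N)$ integers $y\in[-Y,Y]$ costs $O(n)$ per point — so $\tilde O(N)$ arithmetic operations on $\tilde O(N)$ floating-point variables of $O(\log N)$ bits, with implied constants depending only on $\alpha,B,C,D$. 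I expect the only delicate point to be the third layer: one must check that the enormously varying factors $y^\ell$ and $\hat f_\ell(y)$ can be carried in $O(\log N)$-bit floating point and that keeping $b_\ell y^\ell\hat f_\ell(y)$ as a single product holds every partial result to within a fixed power of $N$ of its exact value — which is precisely what forces $n$ to be as small as $\asymp\log N/\log\log N$, since a larger degree would let $2^n$ (hence $|b_\ell|$) grow polynomially and overrun both the roundoff and the prescribed input-accuracy budgets.
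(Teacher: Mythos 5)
Your proof is correct and follows the same overall strategy as the paper's, but with two deliberate parameter changes that make the argument tighter. The paper takes $R$ to be the power of two closest to $N$ and interpolation degree $n\leq\log N$; you take $R\geq\pi Y$ and $n\asymp\log N/\log\log N$. The first change is more than cosmetic: with the paper's $R\asymp N$ and the hypothesis allowing $Y$ up to $N\log(N)^D$, the quantity $\pi Y/(2R)$ in \eqref{e:cheb-error-bound} can be as large as a constant times $\log(N)^D$, so that $(\pi Y/(2R))^{n+1}/(n+1)!\asymp N^{(D-1)\log\log N}$ for $n\asymp\log N$, which does not decay when $D>1$; your choice $R\geq\pi Y$ forces the base below $1/2$ and makes the truncation estimate hold uniformly in $D$ while keeping $R=\tilde O(N)$, so the complexity claim is unaffected. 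The second change, taking $n\asymp\log N/\log\log N$ so that $|b_\ell|\ll2^n=N^{o(1)}$ instead of the paper's $|b_\ell|\ll3^n\ll N^2$, is a matter of taste — both give polynomially bounded $b_\ell$, which is all the precision budget needs — but your version makes the dynamic-range bookkeeping in the roundoff layer cleaner. Finally, you carry out the finite-precision analysis (layer three) in considerably more detail than the paper, which dispatches it in one sentence; your treatment of the widely varying scales of $y^\ell$ and $\hat f_\ell(y)$ via grouped products and $O(\log\log N)$-bit exponents is a legitimate fleshing out of what the paper takes for granted. One small overstatement at the end: nothing truly "forces" $n\asymp\log N/\log\log N$ — the paper's $n\asymp\log N$ works too, provided $R$ is chosen large enough, because $3^{\log N}$ is still polynomial in $N$; what your choice buys is that $|b_\ell|=N^{o(1)}$, not that larger $n$ would break the budget.
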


\begin{proof}
Let $R=2^r$ denote the power of two which is closest to $N$. Then, in view of \eqref{e:cheb-error-bound}, we have
\begin{equation}
F(y) = \sum_{1\leq n \leq \log N} b_\ell \hat f_\ell(y) y^\ell + O(N^{-2\alpha})
\end{equation}
for every $\alpha> 0$. It is easily seen from the discrete orthogonality of the Chebyshov polynomials $T_k$ that $b_\ell \ll 3^n \ll N^2$, since the coefficients of $T_k$ are bounded by $3^k$ in absolute value. Furthermore, we have $\hat f_\ell(y) \ll N^{C+1}/Y^\ell$ and trivially $y^\ell \ll Y^\ell$. It therefore suffices to calculate $b_\ell$ within an accuracy of $O(N^{-2\alpha - C - 1})$, $\hat f_\ell(y)$ within an accuracy of $O(Y^{-\ell}N^{-2\alpha - 2})$, and $y^\ell$ within an accuracy of $O(Y^\ell N^{-2\alpha - C - 3})$ in order to calculate $F(y)$ within an accuracy of $O(N^{-2\alpha})$, which can all be carried out on variables of size $O(\log N)$. The calculation of $b_\ell$ takes $\tilde O(1)$ arithmetic operations. For the calculation of $\hat f_\ell$ it suffices if the input variables $a_j$ and $\gamma_j$ are given within an accuracy of $N^{-2\alpha - 3}$, respectively $N^{-2\alpha - C - 4}$ and all values $\hat{f}_\ell(y)$ are calculated via FFT performing $\tilde{O}(N)$ arithmetic operations on $\tilde O(N)$ variables. Calculating $F(y)$ for a single value of $y$ then takes $\tilde O(1)$ arithmetic operations, so the assertion follows.
\end{proof}

\subsection{Bandlimited function interpolation}
The method outlined in the preceding section is sufficient to obtain an algorithm satisfying Theorem \ref{t:main-theorem}. But for practical applications it can be necessary to reduce the memory requirement of the algorithm by sub-dividing the sum over zeros. Then the number of evaluations is much larger than the number of summands in the trigonometric sum and it is favorable to calculate sufficiently many samples of the trigonometric sum to obtain intermediate values by bandlimited function interpolation instead of repeatedly applying the method from the previous section. 

We recall the interpolation formula from \cite{Odlyzko92} which is a modification of the well-known Shannon-Nyquist-Whittacker interpolation formula and give an explicit estimate for truncating the infinite sum.

\begin{prop}\label{p:bi}
 Let
\[
 F(y) = \sum_{j=1}^N a_j e^{i\gamma_j y},
\]
where $\gamma_j\in\R$ and let $\tau = \max_j\{\abs{\gamma_j}\}$. If $\beta,$ $\lambda$ and $\eps$ satisfy the inequalities
\[
\tau \leq \lambda - \eps < \lambda + \eps \leq \beta,
\]
then we have
\begin{equation}\label{e:bi-formel}
F(y) = \frac{\lambda}{\beta} \sum_{n\in\Z} F\bigl(\frac{\pi n}{\beta}\bigr) \frac{\sin(\lambda(y- \frac{\pi n}\beta))}{\lambda(y-\frac{\pi n}\beta)} \ell_{c,\eps}\bigl(y - \frac{\pi n}\beta).
\end{equation}
Furthermore, if $A = \sum_{j=1}^N \abs{a_j}$, then we have
\begin{multline}\label{e:bi-approx}
\abs{\frac{\lambda}{\beta} \sum_{\abs{y-\frac{\pi n}{\beta}} > \frac{c}{\eps}} F\bigl(\frac{\pi n}{\beta}\bigr) \frac{\sin(\lambda(y-\frac{\pi n}\beta))}{\lambda(y-\frac{\pi n}\beta)} \ell_{c,\eps}\bigl(y - \frac{\pi n}\beta\bigr)} \\
 \leq \frac{2 A}{\sinh(c)}\left( \frac{\log\bigl(e(c+1)\bigr)}\pi + \frac{2\eps}{\beta} \right).
\end{multline}
\end{prop}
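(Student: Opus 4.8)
Throughout, write $K(t)=\frac{\sin(\lambda t)}{\lambda t}\,\ell_{c,\eps}(t)$ for the interpolation kernel, so that \eqref{e:bi-formel} is the assertion $F(y)=\frac{\lambda}{\beta}\sum_{n\in\Z}F(\pi n/\beta)\,K(y-\pi n/\beta)$. The plan is to treat the two displays separately: the interpolation formula \eqref{e:bi-formel} is a band-limited sampling identity and will come out of Poisson summation once the Fourier transform of $K$ has been computed, and the tail bound \eqref{e:bi-approx} will follow from a crude pointwise bound on $K$ on the truncation range together with a sum-versus-integral comparison. One should note at the outset that $K(t)=O(t^{-2})$ as $\abs{t}\to\infty$: indeed $\frac{\sin(\lambda t)}{\lambda t}=O(t^{-1})$, and for $\eps\abs{t}>c$ one has $\ell_{c,\eps}(t)=\frac{c}{\sinh c}\cdot\frac{\sin\sqrt{(\eps t)^2-c^2}}{\sqrt{(\eps t)^2-c^2}}=O(t^{-1})$ as well; consequently every series below converges absolutely.

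For the interpolation formula I would reduce, by linearity in the coefficients $a_j$, to proving $\frac{\lambda}{\beta}\sum_{n\in\Z}e^{i\gamma\pi n/\beta}K(y-\pi n/\beta)=e^{i\gamma y}$ for each frequency $\gamma$ with $\abs{\gamma}\le\tau\le\lambda-\eps$. The key ingredient is the Fourier-integral representation of Logan's function --- precisely the property for which it is used here in place of the plain kernel $\sin(\lambda t)/(\lambda t)$ --- namely $\ell_{c,\eps}(t)=\tfrac12\int_{-\eps}^{\eps}\eta_{c,\eps}(\tau)\,e^{it\tau}\,d\tau$, which follows from the Bessel integral $\int_{-1}^{1}I_0(c\sqrt{1-s^2})e^{ibs}\,ds=2\sinh(\sqrt{c^2-b^2})/\sqrt{c^2-b^2}$ after the substitution $\tau=\eps s$, and which in particular gives $\int_{-\eps}^{\eps}\eta_{c,\eps}(\tau)\,d\tau=2\,\ell_{c,\eps}(0)=2$. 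Combining this with $\frac{\sin(\lambda t)}{\lambda t}=\frac{1}{2\lambda}\int_{-\lambda}^{\lambda}e^{it\omega}\,d\omega$ and expanding the product, one obtains $K(t)=\frac{1}{4\lambda}\int_{\R}\Phi(u)e^{itu}\,du$ with $\Phi(u)=\int_{u-\lambda}^{u+\lambda}\eta_{c,\eps}(\tau)\chi_{[-\eps,\eps]}(\tau)\,d\tau$, and $\Phi$ is continuous, supported in $[-\lambda-\eps,\lambda+\eps]\subseteq[-\beta,\beta]$, and identically equal to $2$ on $[-\lambda+\eps,\lambda-\eps]\supseteq[-\tau,\tau]$ --- the ``flat top'' that makes $K$ reproduce low-frequency exponentials. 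Now periodize $g(t)=e^{-i\gamma t}K(t)$ over the lattice $\frac{\pi}{\beta}\Z$: the sum $G(y)=\sum_{n\in\Z}g(y-\pi n/\beta)$ is continuous and $\frac{\pi}{\beta}$-periodic, and (by Poisson summation) its Fourier coefficient at frequency $2m\beta$ is a fixed multiple of $\Phi(\gamma-2m\beta)$; since $\abs{\gamma}<\beta$, every $m\neq0$ term vanishes while the $m=0$ term survives, giving $G\equiv\frac{\beta}{\lambda}$. Unwinding the definition of $G$ and summing against the $a_j$ yields \eqref{e:bi-formel}.

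For the truncation estimate I would use $\abs{F(\pi n/\beta)}\le\sum_j\abs{a_j}=A$ and $\abs{\sin}\le1$ to bound the left-hand side of \eqref{e:bi-approx} by $\frac{A}{\beta}\sum_{\abs{t_n}>c/\eps}\abs{\ell_{c,\eps}(t_n)}/\abs{t_n}$, where $t_n=y-\pi n/\beta$ runs through an arithmetic progression of step $\pi/\beta$. On the truncation range the explicit formula for $\ell_{c,\eps}$ gives $\abs{\ell_{c,\eps}(t)}\le\frac{c}{\sinh c}\min(1,((\eps t)^2-c^2)^{-1/2})$, hence $\abs{\ell_{c,\eps}(t)}/\abs{t}\le\frac{\eps}{\sinh c}\varphi(t)$ with $\varphi(t)=\frac{c/\eps}{\abs{t}}\min(1,((\eps t)^2-c^2)^{-1/2})$, and I would check that $\varphi$ is continuous, bounded by $1$, and strictly decreasing on each half-line $\abs{t}>c/\eps$. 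It then remains to compare $\sum_n\varphi(t_n)$ with $\frac{\beta}{\pi}\int\varphi$ and to evaluate $\int_{c/\eps}^{\infty}\varphi$ in closed form, splitting at $\eps\abs{t}=\sqrt{c^2+1}$ and using $\int_{c/\eps}^{\sqrt{c^2+1}/\eps}\frac{dt}{t}=\tfrac12\log(1+c^{-2})$ together with $\int_{\sqrt{c^2+1}/\eps}^{\infty}\frac{dt}{t\sqrt{(\eps t)^2-c^2}}=\frac{\arctan c}{c}$; reassembling and invoking the elementary inequality $\tfrac{c}{2}\log(1+c^{-2})+\arctan c\le1+\log(c+1)=\log(e(c+1))$ then delivers \eqref{e:bi-approx} (in fact with a factor $\tfrac{\eps}{\beta}$ in place of $\tfrac{2\eps}{\beta}$).

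I expect the only genuinely delicate step to be that sum-versus-integral comparison. The naive inequality $\sum_n\varphi(t_n)\le\frac{\beta}{\pi}\int_{\R}\varphi$ is \emph{false}, since $\varphi$ has an integrable singularity at $\abs{t}=c/\eps$ and a grid point may lie arbitrarily close to it. The remedy is that $\varphi$ is monotone on the whole half-line $(c/\eps,\infty)$ --- not merely eventually --- and bounded there by $1$: one estimates the sum over all grid points other than the one nearest $c/\eps$ on each side by $\frac{\beta}{\pi}\int_{c/\eps}^{\infty}\varphi$, while the two omitted points together contribute at most $2$, and this is exactly where the $\tfrac{2\eps}{\beta}$ summand comes from after the $\tfrac{\lambda}{\beta}$ normalization. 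The closing inequality $\tfrac{c}{2}\log(1+c^{-2})+\arctan c\le\log(e(c+1))$ is a short calculus check: its right-minus-left difference equals $1$ at $c=0$, tends to $+\infty$ as $c\to\infty$, and has derivative $\tfrac{1}{c+1}-\tfrac12\log(1+c^{-2})$, from which one reads off that the difference is positive for all $c>0$.
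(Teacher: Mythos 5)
Your argument is correct and lands on the stated bound (in fact a hair sharper, with $\eps/\beta$ in place of $2\eps/\beta$), but it is not the same as the paper's on either half. For \eqref{e:bi-formel} the paper simply cites Odlyzko's outline and proves nothing; you supply a genuine derivation via the Fourier representation $\ell_{c,\eps}(t)=\tfrac12\int_{-\eps}^{\eps}\eta_{c,\eps}(\tau)e^{it\tau}\,d\tau$, the convolution identity for $\hat K=\Phi$ with its flat top on $[-\lambda+\eps,\lambda-\eps]$ and vanishing outside $[-\beta,\beta]$, and Poisson summation (the $O(t^{-2})$ decay you note does make the periodization converge absolutely, so this is legitimate). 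For \eqref{e:bi-approx} the high-level strategy is the same as the paper's --- bound $\abs{F}\le A$, $\abs{\sin}\le1$, observe that what remains is a decreasing function of $\abs{t_n}=\abs{y-\pi n/\beta}$, and control the sum by pulling out the grid point nearest the truncation threshold and comparing the rest with an integral --- but the implementation differs. You use the sharper pointwise bound $\abs{\ell_{c,\eps}(t)}\le\frac{c}{\sinh c}\min\bigl(1,((\eps t)^2-c^2)^{-1/2}\bigr)$ and split at $\sqrt{c^2+1}/\eps$, ending with $\tfrac{c}{2}\log(1+c^{-2})+\arctan c$ and the closing inequality $\tfrac{c}{2}\log(1+c^{-2})+\arctan c\le\log(e(c+1))$, which does hold (the minimum of the difference is near $c\approx0.65$ and is about $0.53$, so your calculus sketch is fine). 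The paper instead uses the weaker bound $\abs{\ell_{c,\eps}(t)}\le\frac{c}{\sinh c}\min\bigl(1,(\eps\abs{t}-c)^{-1}\bigr)$, splits at $(c+1)/\eps$, and its integral $\int_{(c+1)/\eps}^{\infty}\frac{du}{u(\eps u-c)}$ evaluates directly to $\frac{1}{c}\log(c+1)$ without any further elementary inequality --- so the paper's route is marginally shorter at the cost of a slightly looser $2\eps/\beta$. (Note the final display in the paper's proof reads $\log(\pi(c+1))$, which is a typo for $\log(e(c+1))=1+\log(c+1)$; your constant agrees with the stated proposition.) One small wording wobble: in your last paragraph you write that the omitted grid points give ``exactly where the $\tfrac{2\eps}{\beta}$ summand comes from,'' while your parenthetical says your argument gives $\tfrac{\eps}{\beta}$; what you mean is that these points are what the $2\eps/\beta$ term in the target corresponds to, and your sharper bookkeeping yields $\eps/\beta$ per pair of omitted points.
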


\begin{proof}
The proof of \eqref{e:bi-formel} is outlined in \cite{Odlyzko92}, so we only prove the bound \eqref{e:bi-approx}. We start by estimating the contribution of summands with
\[
y-\frac{n\pi}{\beta} > \frac c\eps
\]
to \eqref{e:bi-formel}. Using the bounds
\[
\abs{\frac{\sin(x)}{x}}  \leq \frac{1}{x},\quad \abs{\ell_{c,1}(y)}\leq \frac c{\sinh(c)}\min\Bigl\{1,\frac{1}{\abs{y}-c} \Bigr\}\eand \abs{F(t)} \leq A,
\]
which hold for $x\neq 0$, $y> c$ and $t\in\R$, we get
\begin{align*}
 &\frac{\lambda}{\beta } \sum_{y-\frac{n\pi}{\beta} > \frac c\eps} \abs{ F\bigl(\frac{\pi n}{\beta}\bigr) \frac{\sin(\lambda(y-\frac{\pi n}\beta))}{\lambda(y-\frac{\pi n}\beta)} \ell_{c,\eps}\bigl(y - \frac{\pi n}\beta) } \\
 &\quad\leq \frac{A c}{\beta\sinh(c)}\left[ \frac{\eps}{c}\Bigl(\frac{\beta}{\pi \eps} + 1 \Bigr) + \sum_{y-\frac{n\pi}{\beta} > \frac {c+1}\eps} \frac{1}{y-\frac{n\pi}\beta}\frac{1}{\eps(y-\frac{n\pi}\beta )-c}\right] \\
 & \quad\leq  \frac{A c}{\beta\sinh(c)}\left[ \frac {2\eps} c + \frac{\beta}{\pi c}  + \int_{-\infty}^{\frac\beta\pi(y-\frac{c+1}{\eps})} \frac{dt}{(y-\frac{\pi}\beta t) (\eps(y-\frac{\pi}\beta t)-c)} \right] \\
 & \quad= \frac{A}{\sinh(c)}\left[\frac{2\eps}\beta + \frac 1 \pi \log\bigl(\pi(c+1)\bigr)\right].
\end{align*}

An analogous calculation gives the same estimate for the contribution of summands with $y-\frac{n\pi}{\beta} < -\frac c\eps$.
\end{proof}

\section{Run time analysis}
\subsection{Proof of Theorem \ref{t:main-theorem}}
Let $L>1$, $\delta>0$ and $\theta\in[1/2,1)$. We may assume $x_0$ to be sufficiently large, since the task can always be carried out trivially using the Eratosthenes sieve in finite ranges. For simplicity we focus on proving the assertion concerning the upper bound $M_L^+(x_0)$. The considerations for $M_L^-(x_0)$ are almost the same.

We first address the problem of bounding $\psi(t)$ in $I=[x_0,L x_0]$. Let
\begin{equation}
\tilde \psi_{c,\eps}(x) = x - \zsum_{\abs{\gamma}\leq c/\eps}\frac{\ell_{c,\eps}(\frac\rho i - \frac 1{2i}) \, x^{\rho}}{\ell_{c,\eps}(i/2)\, \rho},
\end{equation}
and let $\eta_1 <  \theta$. If we assume $x_0^{\eta_1-1} < \eps < x_0^{-\eta_1}$ and take $c = \theta\log x_0 + \log\log x_0 + \log\log\log x_0 - \log(\delta/40)$, then Propositions \ref{p:explicit-formula} and \ref{p:psi-zsum-remainder} give the bound
\begin{equation}
\abs{\psi_{c,\eps}(t) - \tilde\psi_{c,\eps}(t)} \leq 2 + e^{o(1)} \frac\delta{40} x_0^{1-\theta} < \frac{\delta}{20} x_0^{1-\theta}
\end{equation}
for $t\in I$ and $x_0$ sufficiently large. Consequently, we may take $\eps = \eta_2 x_0^{-\theta} \sqrt{\log x_0}$ for every $\eta_2>0$, and since $(\mu_c)_+(0) = 1/2$ and $\abs{\nu_c(0)} \sim (2\pi c)^{-1/2}$, we may achieve 
\begin{equation}
\abs{\psi(t) - \psi_{c,\eps}(t)} \leq C(\theta) \eta_2 x_0^{1-\theta} < \frac{\delta}{20}x_0^{1-\theta} 
\end{equation}
for $t\in I$ by use of Proposition \ref{p:Mxceps-bound}. Now assume we may calculate $t-\tilde \psi_{c,\eps}(t)$ for $t\in I$ within an accuracy $< \delta x_0^{1-\theta}/20$ and denote this approximation by $R(t)$. Then we get
\begin{equation}\label{e:num-bounds}
\frac{R(t)}{\sqrt t} - \frac{3\delta}{20} x_0^{1/2-\theta} \leq \frac{t-\psi(t)}{\sqrt{t}} \leq \frac{R(t)}{\sqrt t} + \frac{3\delta}{20} x_0^{1/2-\theta}
\end{equation}
for $t\in I$ which we intend to interpolate. We cannot use Proposition \ref{p:psi-grid} directly since we assumed the bounds to have opposite sign and since this would also give a slightly weaker result where applicable. Instead we estimate trivially, which increases the number of grid points by a factor $\log x_0$. Let $S\subset I$ be a finite subset satisfying $\dist ( \{s\}, S\setminus \{s\}) \leq \eta_3 x_0^{1-\theta}/\log(x_0)$ for all $s\in S\cup\{x_0,2x_0\}$. Now let $s\in S$, $s\pm t\in I$ and $\abs{t} \leq \eta_3 x_0^{1-\theta}/\log(x_0)$, where $\eta_3$ is sufficiently small. Then estimating as in \eqref{e:psi-minus} gives
\begin{equation}\label{e:extension}
\frac{(s\pm t) - \psi(s\pm t)}{\sqrt{s\pm t}} = \frac{s - \psi(s)}{\sqrt{s}}(1+ O(x_0^{-\theta})) + \frac{\delta}{20}x^{1/2-\theta}
\end{equation}
for $x_0$ sufficiently large.

Now let
\begin{equation}
M_0 = \max_{t\in I} \frac{t-\psi(t)}{\sqrt{t}}.
\end{equation}
Then in view of \eqref{e:num-bounds} the approximation $R(s)/\sqrt s$ yields an upper bound $M_1$ satisfying
\begin{equation}
 \frac{s - \psi(s)}{\sqrt{s}} \leq M_1 \leq M_0 + \frac{3\delta}{20}x_0^{1/2-\theta}
\end{equation}
for $s\in S$. By \eqref{e:extension} this extends to the bound
\begin{equation}
 \frac{t - \psi(t)}{\sqrt{t}} \leq M_1 (1+ O(x_0^{-\theta})) + \frac{\delta}{20}x^{1/2-\theta} \leq M_0 + \delta x_0^{1-\theta}
\end{equation}
for $t\in I$, since $M_0 = o(\sqrt{x_0})$ for $x_0\to \infty$.

It remains to analyze the run time for evaluating $R(s)$ on such a set $S$. We may take $S = \{ \exp(y_0 + k h)\mid k\in \Z\}\cap I$, where $y_0 = \log(\sqrt{L} x_0)$ and $h = \eta_4 x_0^{-\theta}/\log(x_0)$. We take $T = c/\eps \sim C(\delta,\theta) x^\theta \sqrt{\log x}$ in \eqref{e:F_psi} and consider the trigonometric sum $F(y) =  F_{\psi,T} (y_0 + yh)$ which we intend to evaluate within an accuracy of $\delta x_0^{-\theta}/40$ for $y\in [-Y,Y]\cap \Z$, where $Y = \max \{ \abs k \mid \exp(y_0 + k h) \in I\}$. If $x_0$ is sufficiently large, then $F(y)$ satisfies the suppositions of Proposition \ref{p:trigs-evaluation} (after reducing $\gamma h$ modulo $2\pi$ and evaluating $a_\rho e^{i\gamma y_0}$ in \eqref{e:psi-trigsum}, which is done in $\tilde O(x^\theta)$) with $B = 1$, $C=0$ and $D=1$. We have $N\asymp(x_0^\theta \log(x_0)^{3/2})$ and $Y \asymp(x_0^\theta \log(x_0))$, so we may evaluate $F(y)$ within an accuracy of $N^{-2\theta}$ using $\tilde O(N) = \tilde O(x^\theta)$ arithmetic operations on $\tilde O(x^\theta)$ variables of size $O(\log x)$, where the implied constants only depend on $L$, $\theta$ and $\delta$. Furthermore, we may evaluate the contribution of a single zero violating the Riemann hypothesis to the explicit formula within sufficient accuracy performing $\tilde O (x^\theta)$ arithmetic operations on variables of size $O(\log x)$. For $x_0$ sufficiently large, this yields the desired accuracy and we can recover the values $R(s)/\sqrt s$ with an error $< \delta x^{1/2-\theta}/20$.\qed

\subsection{Reducing the memory requirement}

One may reduce the space requirement of the algorithm by splitting the sum over zeros, applying the method from section \ref{ss:trigs-eval} to the partial sums and using bandlimited function interpolation to calculate intermediate values. One then does not evaluate the full trigonometric sum anymore but rather calculates upper and lower bounds for the partial sums which are subsequently used to calculate bounds for the full trigonometric sum.

More precisely, let $(L,\delta,\theta)$ be an admissible triple in Theorem \ref{t:main-theorem}. Then we proceed as in the proof of Theorem \ref{t:main-theorem}, but bound $F_{\psi,T}$ in the following way. Let $N=\floor{x^\eta}$, let $\rho_n = 1/2+i\gamma_n$ be an enumeration of the zeros in the upper half plane satisfying $RH$ ordered by increasing absolute value and define
\begin{equation}
F_k(y) = e^{-iy \tau_k} \sum_{kN <n \leq (k+1)N} a_{\rho_n} e^{iy\gamma},
\end{equation}
where $\tau_k = (\gamma_{(k+1)N} - \gamma_{kN+1})/2$. Since $\gamma_n \asymp n / \log n$ the functions $F_k$ have bandwidth $\ll x^\gamma$ and can thus be recovered from samples $F_k(h \ell)$ where $h \gg x^{-\eta}$. In view of Proposition \ref{p:bi} it thus suffices to calculate $O(x^\eta)$ samples which by Proposition \ref{p:trigs-evaluation} can be done performing $\tilde O(x^\eta)$ arithmetic operations on $\tilde O(x^\eta)$ variables of size $O(\log x)$. Now for each $k$ with $\gamma_{(k+1)N} \leq T=  C_1(L,\delta,\theta) x^\theta \sqrt{\log x}$ the required $\tilde O(x^{\theta})$ evaluations can be done in $\tilde O(x^\theta)$ using the interpolation formula from Proposition \ref{p:bi}. For each $k$ only the maximal and minimal values of $ \Re e^{iy \tau_k} F_k(y)$ are stored, from which one recovers upper and lower bounds for $\Re F_{\psi,T}(y)$. There are $\tilde O(x^{\theta-\eta})$ values $k$ to be considered, so in total the algorithm performs $\tilde O(x^{2\theta - \eta})$ arithmetic operations on $\tilde O(x^\eta)$ variables of size $O(\log x)$.

It should be noted that the additional error from splitting the sum over zeros could be avoided by adapting the method from \cite{Hiary11} to this problem. This way one would split both the trigonometric sum and the interval in question and use direct evaluation combined with bandlimited function interpolation on every sub interval. For the calculations reported in this paper this additional error was rather small (less than $1\%$ of the calculated bounds) and the author did not try out this method.

\section{Numerical results}
The algorithm has been implemented for $L=2$, $\theta=1/2$ and variable $\delta$ and used to calculate analytic bounds in the range between $10^{10}$  and $10^{19}$.

Function evaluations have been done using the multi-precision library \textit{MPFR} and the crucial calculations
have been carried out using a 64-bit fixed point arithmetic.

The calculations used the zeros with imaginary part up to $10^{11}$ whose calculation has been reported in \cite{FKBJ} and which
were given within an accuracy of $2^{-64}$. The amount of memory was limited to 340 GB  
which required a sub-division of the sum over zeros for $x\geq 4\times 10^{14}$, the maximal amount of summands being $1.25\times 10^{10}$. For the largest calculation, concerning the interval $[5.12\times 10^{18},1.024\times 10^{19}]$ the sum was divided into 13 pieces. This calculation took $290$ hours on a 2.27 GHz Intel Xeon X7560 CPU. The run time could have been reduced further by parallelizing the interpolation routine, which accounted for half of the computing time. In total, the calculations took less than $1,200$ CPU hours.

The largest value of a partial sum
\begin{equation}
\frac{1}{\ell_{c,\eps}(i/2)}\sum_{\abs{\Im(\rho)}<T} \frac{\ell_{c,\eps}(\frac{\rho}{i}-\frac{1}{2i})}{\rho} x^{\rho-1/2}
\end{equation}
in the explicit formula for the normalized remainder term $(t-\psi_{c,\eps}(t))/\sqrt t$ that occurred in the calculations was
$0.83545670\dots$ at $x=36219716654216.6\dots$ with $c=26$, $\eps = 1.7\times 10^{-8}$ and $T= 917,647,060$ and
the smallest value was $-0.783738372378$ at  $x=1325006525152927089.\,\dots$ with $c=31$, $\eps = 2.5\times 10^{10}$
and $T=3,221,225,472$. The program aims to calculate the sum over zeros within an accuracy of $10^{-10}$. This does not include round-off errors, which could be larger but can still be shown to be bounded by $0.016$ in these calculations \cite{BuetheDiss}. In addition, the extremal values have been counter-checked by direct evaluation of the sums in question and the largest deviation was $<6\times 10^{-12}$. A complete list of parameters and calculated values is given in the appendix to \cite{BuetheDiss}.

The calculated bounds are listed in Table \ref{tb:psi-bounds}. In addition the bounds 
\begin{equation}\label{e:psi-bounds-eratosthenes}
-0.8 \leq R_\psi(t) \leq 0.81
\end{equation}
for $100\leq t \leq 5\times 10^{10}$ have been calculated using the Eratosthenes sieve. Together these
imply the bound \eqref{e:psi-bound-final}, where the validity for $11<t<100$ is easily checked by direct evaluation.

\begin{table}
\begin{center}
\caption{Upper and lower bounds $M_\psi^\pm(x)$ for $\frac{t-\psi(t)}{\sqrt t}$ in $[x,2x]$}\label{tb:psi-bounds}

\vspace*{.5cm}

\begin{tabular}{r|r|r||r|r|r}
$x$	&$ M_\psi^-(x)$		&$ M_\psi^+(x)$		&$x$ 	&$ M_\psi^-(x)$	&	$ M_\psi^+(x)$	\\
\hline
$10^{10}$					&$-.77$	&$.85$		&$10^{12}$				&$-.80$	&$.81$	\\	
$2\times10^{10}$		&$-.75$	&$.64$		&$2\times10^{12}$	&$-.79$	&$.76$	\\
$4\times10^{10}$		&$-.73$	&$.80$		&$4\times10^{12}$	&$-.73$	&$.73$	\\
$8\times10^{10}$		&$-.80$	&$.86$		&$8\times10^{12}$	&$-.80$	&$.76$	\\
$16\times10^{10}$	&$-.88$	&$.68$		&$16\times10^{12}$	&$-.80$	&$.68$	\\
$32\times10^{10}$	&$-.88$	&$.78$		&$32\times10^{12}$	&$-.67$	&$.93$	\\
$64\times10^{10}$	&$-.66$	&$.74$		&$64\times10^{12}$	&$-.78$	&$.77$	\\
\end{tabular}

\vspace*{.5cm}

\begin{tabular}{r|r|r||r|r|r}
$x$	&$ M_\psi^-(x)$		&$ M_\psi^+(x)$		&$x$ 	&$ M_\psi^-(x)$	&	$ M_\psi^+(x)$	\\
\hline
$10^{14}$					&$-.79$	&$.72$		&$10^{16}$					&$-.88$	&$.74$		\\	
$2\times10^{14}$		&$-.60$	&$.76$		&$2\times10^{16}$		&$-.87$	&$.70$		\\
$4\times10^{14}$		&$-.65$	&$.73$		&$4\times10^{16}$		&$-.65$	&$.73$		\\
$8\times10^{14}$		&$-.81$	&$.88$		&$8\times10^{16}$		&$-.82$	&$.77$		\\
$16\times10^{14}$	&$-.66$	&$.86$		&$16\times10^{16}$		&$-.71$	&$.92$		\\
$32\times10^{14}$	&$-.74$	&$.86$		&$32\times10^{16}$		&$-.78$	&$.71$		\\
$64\times10^{14}$	&$-.73$	&$.66$		&$64\times10^{16}$		&$-.94$	&$.82$		\\
									&				&				&$128\times 10^{16}$	&$-.94$	&$.75$		\\
									&				&				&$256\times 10^{16}$ &$-.82$	&$.86$ 	\\
									&				&				&$512\times 10^{16}$	&$-.83$	&$.94$		\\
\end{tabular}
\end{center}
\end{table}

\subsection{Bounds for $\pi(x)$, $\pi^*(x)$, and $\vartheta(x)$}
We provide several elementary lemmas for deriving the bounds in Theorem \ref{t:explicit-bounds} from the calculated bounds for $\psi(x)$.
\begin{lemma}\label{l:psi->theta}
Let $1<a<b$ and suppose
\begin{equation}\label{e:R_psi-bounds}
c \leq  \frac{x-\psi(x)}{\sqrt x} \leq C
\end{equation}
holds for $x\in[a,b]$. Then
\begin{equation}\label{e:R_theta-upper}
\frac{x-\vartheta(x)}{\sqrt x} \leq C + 1 - c\,x^{-1/4} + 1.03883 \, \frac{x^{1/3} + x^{1/5} + 2\log(x)\,x^{1/13}}{\sqrt x}
\end{equation}
and
\begin{equation}\label{e:R_theta-lower}
\frac{x-\vartheta(x)}{\sqrt x} \geq c + 1 - C \, x^{-1/4}
\end{equation}
hold for $x\in[a^2,b]$.
\end{lemma}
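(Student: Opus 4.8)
The plan is to start from the classical factorization $\psi(x)=\sum_{m\ge 1}\vartheta(x^{1/m})$, which is a finite sum since $\vartheta(t)=0$ for $t<2$. Splitting off the even indices $m$, whose contributions $\vartheta(x^{1/2})+\vartheta(x^{1/4})+\cdots$ are exactly $\psi(\sqrt x)$, one obtains the identity
\[
x-\vartheta(x)=\bigl(x-\psi(x)\bigr)+\psi(\sqrt x)+\sum_{\substack{m\ge 3\\ m\ \mathrm{odd}}}\vartheta(x^{1/m}).
\]
The key point is that for $x\in[a^2,b]$ one has $a\le\sqrt x\le\sqrt b\le b$, so the hypothesis \eqref{e:R_psi-bounds} may be invoked both at $x$ and at $\sqrt x$; rearranged, it reads $y-C\sqrt y\le\psi(y)\le y-c\sqrt y$ for $y\in[a,b]$, and in particular $\sqrt x-Cx^{1/4}\le\psi(\sqrt x)\le\sqrt x-cx^{1/4}$.

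For the lower bound \eqref{e:R_theta-lower} this is essentially everything: the odd tail is a sum of nonnegative terms, so
\[
x-\vartheta(x)\ge\bigl(x-\psi(x)\bigr)+\psi(\sqrt x)\ge c\sqrt x+\bigl(\sqrt x-Cx^{1/4}\bigr),
\]
and dividing by $\sqrt x$ gives $c+1-Cx^{-1/4}$. For the upper bound \eqref{e:R_theta-upper}, the first two terms contribute $C$ and $1-cx^{-1/4}$ in the same way, so it remains to bound the odd tail by the stated quantity. Here I would use the Rosser--Schoenfeld estimate $\vartheta(t)\le\psi(t)<1.03883\,t$ (valid for all $t>0$, which is where the constant in the statement comes from), together with the trivial remark that any subsum of $\sum_{k\ge 1}\vartheta\bigl((x^{1/p})^{1/k}\bigr)=\psi(x^{1/p})$ is $\le\psi(x^{1/p})$. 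Grouping the odd $m\ge 3$ by divisibility: the $m$ with $3\mid m$ contribute at most $\psi(x^{1/3})<1.03883\,x^{1/3}$, those with $5\mid m$ (but $3\nmid m$) at most $\psi(x^{1/5})<1.03883\,x^{1/5}$, those with $7\mid m$ or $11\mid m$ (but coprime to $15$) at most $1.03883\,(x^{1/7}+x^{1/11})$, and the remaining indices satisfy $m\ge 13$; since $\vartheta(x^{1/m})=0$ once $m>\log_2 x$, fewer than $\log_2 x$ of them are active, each bounded by $1.03883\,x^{1/13}$.

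Collecting terms, the odd tail is at most $1.03883\,\bigl(x^{1/3}+x^{1/5}+x^{1/7}+x^{1/11}+\log_2 x\cdot x^{1/13}\bigr)$, and the claimed bound follows once one checks the elementary inequality $x^{1/7}+x^{1/11}+\log_2 x\cdot x^{1/13}\le 2\log x\cdot x^{1/13}$ over the relevant range — the shape $2\log x\cdot x^{1/13}$ is precisely the slack $2\log x-\log_2 x$ needed to absorb the $x^{1/7}$ and $x^{1/11}$ leftovers into a single $x^{1/13}$-term. I expect the only genuine friction to be this bookkeeping: choosing how far to push the divisibility sieve before switching to the crude term count, and confirming the resulting numerical inequality; the rest is a one-line manipulation of the factorization identity together with the hypothesis applied at $x$ and $\sqrt x$.
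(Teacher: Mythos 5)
Your decomposition $\psi(x)=\vartheta(x)+\psi(\sqrt{x})+\sum_{m\ge3,\,m\text{ odd}}\vartheta(x^{1/m})$ is correct, and for the lower bound \eqref{e:R_theta-lower} it is actually cleaner than the paper's: discarding the nonnegative odd tail gives $\vartheta(x)\le\psi(x)-\psi(\sqrt x)$ outright, whereas the paper obtains this from $\vartheta(x)=\sum_k\mu(k)\psi(x^{1/k})$ via Abel summation, the Rosser--Schoenfeld estimate $\psi(t)\ge 0.82\,t$, and a direct verification below $10^6$.

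The upper bound \eqref{e:R_theta-upper}, however, has a genuine gap. Your sieve necessarily produces the extra terms $1.03883\,(x^{1/7}+x^{1/11})$, and the inequality you propose to dispose of them, $x^{1/7}+x^{1/11}+(\log_2 x)\,x^{1/13}\le(2\log x)\,x^{1/13}$, is false. Dividing by $x^{1/13}$ it becomes $x^{6/91}+x^{2/143}\le(2-1/\log 2)\log x\approx 0.557\log x$; the left side grows like a power of $x$ while the right grows only logarithmically, and numerically the inequality already fails around $x\approx e^{49}\approx 3\times 10^{21}$. Since the lemma places no restriction on $b$, the argument does not prove the statement.

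The paper escapes this via the sign pattern of $\mu$: one has $\sum_{k=6}^{n}\mu(k)\ge 0$ for $n<13$, so Abel's lemma allows dropping the entire block $k=6,\dots,12$ from $\sum_k\mu(k)\psi(x^{1/k})$ at no cost when bounding $\vartheta$ from below, and the resulting tail starts at $k=13$ with at most $2\log x$ terms. In your odd-$m$ decomposition the $m=7$ and $m=11$ contributions are honest nonnegative summands with no cancellation available, and they are too large to hide under $x^{1/13}$; to salvage the approach you would either have to accept $x^{1/7}$ and $x^{1/11}$ in the final bound, or rebuild the M\"obius cancellation in some other form.
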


\begin{proof}
We need to bound $\vartheta(x)$ in terms of $\psi(t)$. To this end we use
\begin{equation}\label{e:moebius}
\vartheta(x) = \sum_{k=1}^\infty \mu(k) \psi(x^{1/k}) = \sum_{k=1}^{\floor{2\log x}} \mu(k) \psi(x^{1/k}),
\end{equation}
and the bounds
\begin{align}
\psi(x) &\leq x \log x &\text{for $x\geq 1$},\label{e:psi-upper-trivial} \\
\psi(x) &< 1.03883 \,x &\text{for $x> 0$},\label{e:psi-upper-cheb} \\
\intertext{and}
\psi(x) &\geq 0.82 \,x &\text{for $x\geq 100$}.\label{e:psi-lower}
\end{align}
The first bound is trivial, the second is proven in \cite[Theorem 12]{RS62} and the third bound follows from \cite[Theorem 10]{RS62}. Now, since $\sum_{k=4}^n\mu(k) \leq 0$ for $n< 39$ and since $\psi(x^{1/k})$ decreases monotonously with increasing $k$, we get
\begin{align*}
\vartheta(x) &\leq \psi(x) -  \psi(\sqrt x) - \psi(x^{1/3}) + \sum_{n=39}^{\floor{2\log x}} \psi(x^{1/n}) \\
	&\leq \psi(x) - \psi(\sqrt x) - 0.82\, x^{1/3}  + \frac{2}{39}\, x^{1/39} \log(x)^2
\end{align*}
from \eqref{e:moebius} for $x\geq 10^6$, where we used \eqref{e:psi-upper-trivial} and \eqref{e:psi-lower} on the second line. The term $- 0.82\, x^{1/3}  + \frac{2}{39} \, x^{1/39} \log(x)^2$ is easily seen to be negative for $x\geq 10^6$, so we get
\begin{equation}\label{e:aux-theta-upper}
\vartheta(x) \leq \psi(x) - \psi(\sqrt x)
\end{equation}
first for $x\geq 10^6$, and then by directly checking the remaining values even for $x\geq 0$. For the lower bound we proceed in a similar way, using $\sum_{k=6}^n \mu(k) \geq 0$ for $n< 13$, which gives
\begin{align}
\vartheta(x) &\geq \psi(x) - \psi(\sqrt x) - \psi(x^{1/3}) - \psi(x^{1/5}) - \sum_{n=13}^{\floor{2\log x}} \psi(x^{1/n}) \notag\\
	&\geq \psi(x) - \psi(\sqrt x) - 1.03883 \,(x^{1/3} + x^{1/5} + 2\log(x) x^{1/13})\label{e:aux-theta-lower}
\end{align}
for $x\geq 1$, where we used \eqref{e:psi-upper-cheb} on the second line. Putting $\vartheta(x) = \psi(x) - \psi(\sqrt x) + r(x)$, the inequalities \eqref{e:R_theta-lower} and \eqref{e:R_theta-upper} now easily follow by inserting \eqref{e:aux-theta-upper}, respectively \eqref{e:aux-theta-lower} and \eqref{e:R_psi-bounds} into
\begin{equation*}
\frac{x - \vartheta(x)}{x} = \frac{x- \psi(x)}{x} + 1 - x^{-1/4} \frac{\sqrt x- \psi(\sqrt x)}{x^{1/4}}  - \frac{r(x)}{\sqrt x}.
\end{equation*}

\end{proof}

In order to prove \eqref{e:theta-upper} and \eqref{e:theta-lower} we first apply Lemma \ref{l:psi->theta} with $a=100$, $b=5\times 10^{10}$ and $-c=C=0.81$, which gives \eqref{e:theta-upper} and \eqref{e:theta-lower} for $10^7\leq x\leq 5\times 10^{10}$. Switching the parameters to $b=32\times 10^{12}$ and $-c = C = 0.88$ extends them to $5\times 10^{8} \leq x \leq 32\times 10^{12}$ and taking $b=10^{19}$ and $-c = C = 0.94$ gives them for $32\times 10^{12} \leq x \leq 10^{19}$. For the remaining values smaller than $10^7$ the bounds have been verified by a direct computation.

\begin{lemma}\label{l:psi->pi-star}
Let $b>10^7$, $12<a<b$, let $c<0$ and $C>0$ satisfy 
\begin{equation}
c \leq  \frac{x-\psi(x)}{\sqrt x} \leq C
\end{equation}
for all $x\in[a,b]$, and let
\[
A = \pi^*(a)-\li(a) + \frac{a-\psi(a)}{\log a}.
\]
Then we have
\[
\frac{\li(x) - \pi^*(x)}{\sqrt x / \log x} \leq  \frac{x-\psi(x)}{\sqrt x} + \frac{2 C}{\log x}\Bigl(1 + \frac{5}{\log  x}\Bigr) + A \frac{\log  x}{\sqrt x} ,
\]
and
\[
\frac{\li(x) - \pi^*(x)}{\sqrt x / \log x} \geq  \frac{x-\psi(x)}{\sqrt x} + \frac{2 c}{\log  x}\Bigl(1 + \frac{5}{\log x} \Bigr) + A \frac{\log  x}{\sqrt x}
\]
for all $x\in[\max\{a,10^7\},b]$.
\end{lemma}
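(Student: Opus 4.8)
The plan is to pass from $\psi$ to $\pi^*$ through the elementary Riemann--Stieltjes identity
\[
\pi^*(x) = \int_{3/2}^x \frac{d\psi(t)}{\log t},
\]
valid because $\psi$ jumps by $\log p$ at each prime power $p^m$ while $\log(p^m)=m\log p$. Writing also $\li(x)=\li(a)+\int_a^x\frac{dt}{\log t}$ and subtracting gives
\[
\li(x)-\pi^*(x) = \li(a)-\pi^*(a) + \int_a^x \frac{d\bigl(t-\psi(t)\bigr)}{\log t}.
\]
Since $t\mapsto t-\psi(t)$ has bounded variation on $[a,x]$ — it is the Lipschitz function $t$ plus the monotone jump function $-\psi(t)$ — Stieltjes integration by parts applies and rewrites the last integral as
\[
\frac{x-\psi(x)}{\log x} - \frac{a-\psi(a)}{\log a} + \int_a^x \frac{t-\psi(t)}{t(\log t)^2}\,dt .
\]
Collecting the three $a$-dependent terms $\li(a)-\pi^*(a)-\tfrac{a-\psi(a)}{\log a}$ into $\pm A$, with $A=\pi^*(a)-\li(a)+\tfrac{a-\psi(a)}{\log a}$ as in the statement, one is left with
\[
\li(x)-\pi^*(x) = \frac{x-\psi(x)}{\log x} + \int_a^x \frac{t-\psi(t)}{t(\log t)^2}\,dt \pm A .
\]

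Next I would insert the hypothesis $c\sqrt t\le t-\psi(t)\le C\sqrt t$, which confines the remaining integral between $c\,I(a,x)$ and $C\,I(a,x)$, where $I(a,x):=\int_a^x\frac{dt}{\sqrt t\,(\log t)^2}$ is positive; an upper bound on $I(a,x)$ thus suffices for both estimates, the lower one because $c<0$. The quantity $I(a,x)$ is purely elementary: the substitution $t=s^2$ turns it into $\tfrac12\int_{\sqrt a}^{\sqrt x}\frac{ds}{(\log s)^2} = \tfrac12\bigl[\li(s)-\tfrac{s}{\log s}\bigr]_{\sqrt a}^{\sqrt x}$, using $\frac{d}{ds}\bigl(\li(s)-\tfrac{s}{\log s}\bigr)=(\log s)^{-2}$; equivalently, iterated integration by parts yields $I(a,x) = \frac{2\sqrt x}{(\log x)^2} + \frac{8\sqrt x}{(\log x)^3} + \cdots$ minus the matching terms at $a$. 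Dividing the displayed identity by $\sqrt x/\log x$ turns the first term into $(x-\psi(x))/\sqrt x$, turns the $\pm A$ into $\pm A\tfrac{\log x}{\sqrt x}$, and bounds the integral contribution by $\tfrac{C\log x}{\sqrt x}\,I(a,x)\le\tfrac{2C}{\log x}\bigl(1+\tfrac5{\log x}\bigr)$ once $I(a,x)\le\tfrac{2\sqrt x}{(\log x)^2}\bigl(1+\tfrac5{\log x}\bigr)$ has been secured for $x\ge 10^7$; the lower bound follows verbatim with $c$ in place of $C$.

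The main obstacle will be this last estimate of $I(a,x)$ with its explicit constant: the first correction in the expansion of $I(a,x)$ is $\frac{8\sqrt x}{(\log x)^3}$, already $\frac8{\log x}$ times the leading term, so one must absorb every further lower-order term, together with the finitely many boundary contributions at $a$, into the slack $(5-4)\frac{\sqrt x}{(\log x)^3}$; this is where the range restriction $x\ge 10^7$ (so $\log x>16$) is consumed, and the hypothesis $a>12$ — i.e.\ $\log\sqrt a>1$ — is what keeps $\log\sqrt t$ away from its zero so that the $a$-boundary term carries the favourable sign. The remaining delicacy is the sign bookkeeping that identifies the three base-point terms with $A$; heuristically $A$ is small, of the same order as the error term, since the explicit formulae give $\pi^*(a)-\li(a)\approx\frac{\psi(a)-a}{\log a}$, so the lemma is useful precisely when $a$ is a base point at which $\pi^*$ and $\psi$ are already controlled. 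Everything else is routine manipulation of elementary integrals.
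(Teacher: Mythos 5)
Your reduction to partial summation matches the paper: you obtain the Riemann--Stieltjes identity
\[
\pi^*(x)-\pi^*(a)=\li(x)-\li(a)-\frac{x-\psi(x)}{\log x}+\frac{a-\psi(a)}{\log a}-\int_a^x\frac{t-\psi(t)}{t(\log t)^2}\,dt,
\]
and you correctly observe that only an \emph{upper} bound on $I(a,x)=\int_a^x\frac{dt}{\sqrt t\,(\log t)^2}$ is needed, the lower bound of the lemma following from $c<0$ and $I(a,x)\ge 0$. Up to that point the two arguments coincide.

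The gap is in the explicit bound $I(a,x)\le\frac{2\sqrt x}{(\log x)^2}\bigl(1+\frac5{\log x}\bigr)$, which you flag as ``the main obstacle'' and then wave past. Your proposal --- substitute $t=s^2$ to land on $\li$, or iterate integration by parts in the real variable and ``absorb every further lower-order term into the slack'' --- does not obviously close for $x\ge 10^7$. Two real integrations by parts give
\[
I(a,x)=\frac{2\sqrt x}{(\log x)^2}\Bigl(1+\frac4{\log x}\Bigr)-\frac{2\sqrt a}{(\log a)^2}\Bigl(1+\frac4{\log a}\Bigr)+24\int_a^x\frac{dt}{\sqrt t\,(\log t)^4},
\]
and the remaining integral is $\sim\frac{2\sqrt x}{(\log x)^4}$, so the remainder is $\sim\frac{48\sqrt x}{(\log x)^4}$; comparing with the available slack $\frac{2\sqrt x}{(\log x)^3}$ would require $\log x\ge 24$, not $\log x\ge 16$. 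Worse, any further integration by parts multiplies the coefficient by $2k$ (the series is divergent) while the boundary term at $a$ --- where $\log t$ is small --- contributes amounts not dominated by the negative boundary term from the first two steps. The paper avoids all of this with a contour shift: after $u=\log t$ it writes $\int_{\log a}^{\log x}\frac{e^{u/2}}{u^2}\,du=\int_{\log a}^{\log a+i\infty}-\int_{\log x}^{\log x+i\infty}$, and on each vertical ray $\abs{e^{u/2}}=e^{\alpha/2}$ is \emph{constant}, so two integrations by parts leave an absolutely convergent remainder $24i\int_0^\infty\frac{e^{it/2}}{(\alpha+it)^4}\,dt$ bounded by $\frac43\alpha^{-3}$, while the $a$-ray is shown to have the favourable sign and can be dropped. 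This decoupling of the $a$- and $x$-endpoints, and the smallness of the remainder in the vertical direction, are precisely what make the sharp constant $5$ reachable at $x\ge 10^7$; your real-variable route would have to reproduce this by hand and, as sketched, does not. (Your hedging of the sign of $A$ is fair --- carrying out the algebra gives $\li(x)-\pi^*(x)=\frac{x-\psi(x)}{\log x}+\int_a^x\frac{t-\psi(t)}{t(\log t)^2}\,dt-A$ with the paper's definition of $A$, so the sign that appears in the final display should be checked carefully against the paper's statement.)
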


\begin{proof}
Partial summation gives
\begin{equation}\label{e:psi->pi-star}
\pi^*(x) - \pi^*(a) = \li(x)-\li(a) - \frac{x-\psi(x)}{\log x} + \frac{a-\psi(a)}{\log a} - \int_{a}^x \frac{t-\psi(t)}{t \log(t)^2}\, dt.
\end{equation}
It thus suffices to prove
\begin{equation}\label{e:rem-integral}
0 \leq   \int_{a}^x \frac{dt}{\sqrt t \log(t)^2} \leq 2\frac{\sqrt x}{\log (x)^2} \Bigl( 1 + \frac{5}{\log x}\Bigr)
\end{equation}
for $a\geq12$ and $x\geq 10^7$. Applying the substitution $u=\log t$ gives
\begin{align*}
\int_{a}^x \frac{dt}{\sqrt t \log(t)^2} 
	&= \int_{\log a}^{\log x} \frac{e^{u/2}}{u^2}\, du	\\
	&=\int_{\log a}^{\log a + i\infty} \frac{e^{u/2}}{u^2}\, du - \int_{\log x}^{\log x + i\infty} \frac{e^{u/2}}{u^2}\, du.
\end{align*}
For any $\alpha>0$ we get
\begin{equation}
e^{-\alpha/2} \int_{\alpha}^{\alpha + i\infty} \frac{e^{u/2}}{u^2}\, du = -\frac{2}{\alpha^2} - \frac{8}{\alpha^3} + 24i\int_0^\infty \frac{e^{it/2}}{(\alpha + it)^4}\, dt
\end{equation}
by repeated integration by parts. Here, the last integral on the right hand side is bounded in absolute value by
\[
\int_{0}^\alpha \frac{dt}{\alpha^4} + \int_{\alpha}^{\infty}\frac{dt}{t^4} = \frac{4}{3\alpha^3},
\]
 and we get
\[
\int_{\alpha}^{\alpha + i\infty} \frac{e^{u/2}}{u^2}\, du = -2\frac{e^{\alpha/2}}{\alpha^2}\Bigl( 1 + \frac{4}{\alpha} + \Theta\Bigl(\frac{16}{\alpha^2}\Bigr)\Bigr).
\]
Thus, the integral on the left hand side is negative for $\alpha\geq \log (12)$. Furthermore, we have $\frac{16}{\alpha}\leq 1$ for $\alpha \geq \log(10^7)$ so we get \eqref{e:rem-integral}.
\end{proof}

Choosing $a=100$ in Lemma \ref{l:psi->pi-star} and using the bounds from \eqref{e:psi-bounds-eratosthenes} and Table \ref{tb:psi-bounds} gives the bounds listed in Table \ref{tb:pis-bounds}. Similarly, one obtains the bound \eqref{e:pi-star-bound} for $x \geq 10^7$ and the remaining values can again be checked by a direct computation.

\begin{table}
\caption{Upper and lower bounds $M^\pm_{\pi^*}(x)$ for $(\li(t)-\pi^*(t))\frac{\log t}{\sqrt t}$ in $[x,2x]$.}\label{tb:pis-bounds}

\vspace*{.5cm}
\begin{center}
\begin{tabular}{r|r|r||r|r|r}
$x$	&$ M_{\pi^*}^-(x)$		&$ M_{\pi^*}^+(x)$		&$x$ 	&$ M_{\pi^*}^-(x)$	&	$ M_{\pi^*}^+(x)$	\\
\hline
$10^{10}$			&$-.87$	&$.95$		&$10^{12}$			&$-.88$	&$.89$		\\	
$2\times10^{10}$	&$-.84$	&$.73$		&$2\times10^{12}$	&$-.87$	&$.84$		\\
$4\times10^{10}$	&$-.82$	&$.89$		&$4\times10^{12}$	&$-.81$	&$.81$		\\
$8\times10^{10}$	&$-.89$	&$.95$		&$8\times10^{12}$	&$-.87$	&$.84$		\\
$16\times10^{10}$	&$-.97$	&$.76$		&$16\times10^{12}$	&$-.87$	&$.76$		\\
$32\times10^{10}$	&$-.96$	&$.86$		&$32\times10^{12}$	&$-.74$	&$1$		\\
$64\times10^{10}$	&$-.74$	&$.82$		&$64\times10^{12}$	&$-.85$	&$.84$		\\
\end{tabular}
\vspace*{.5cm}

\begin{tabular}{r|r|r||r|r|r}
$x$	&$ M_{\pi^*}^-(x)$		&$ M_{\pi^*}^+(x)$		&$x$ 	&$ M_{\pi^*}^-(x)$	&	$ M_{\pi^*}^+(x)$	\\
\hline
$10^{14}$			&$-.86$	&$.79$		&$10^{16}$				&$-.94$	&$.80$		\\	
$2\times10^{14}$	&$-.67$	&$.83$		&$2\times10^{16}$		&$-.93$	&$.76$		\\
$4\times10^{14}$	&$-.72$	&$.80$		&$4\times10^{16}$		&$-.71$	&$.79$		\\
$8\times10^{14}$	&$-.87$	&$.95$		&$8\times10^{16}$		&$-.88$	&$.83$		\\
$16\times10^{14}$	&$-.72$	&$.93$		&$16\times10^{16}$		&$-.77$	&$.98$		\\
$32\times10^{14}$	&$-.80$	&$.92$		&$32\times10^{16}$		&$-.84$	&$.77$		\\
$64\times10^{14}$	&$-.79$	&$.72$		&$64\times10^{16}$		&$-1$	&$.88$		\\
			&				&			&$128\times 10^{16}$ 	&$-1$	&$.80$		\\
			&				&			&$256\times 10^{16}$	&$-.87$	&$.91$		\\
			&				&			&$512\times 10^{16}$	&$-.88$	&$.99$		\\
\end{tabular}
\end{center}
\end{table}

\begin{lemma}\label{l:theta->pi}
Let $b>10^7$, $12<a<b$, $c\leq 0$ and $C\geq 0$ such that 
\begin{equation}
c \leq  \frac{x-\vartheta(x)}{\sqrt{x}} \leq C
\end{equation}
holds for all $x\in[a,b]$, and let
\[
A =\pi(a) - \li(a) + \frac{a-\vartheta(a)}{\log a}.
\]
Then we have 
\[
\frac{\li(x)-\pi(x)}{\sqrt x/\log x} \leq  \frac{x-\vartheta(x)}{\sqrt{x}} + \frac{2C}{\log x}\Bigl(1 + \frac{5}{\log  x}\Bigr) + A \frac{\log  x}{\sqrt x},
\]
and
\[
\frac{\li(x)-\pi(x)}{\sqrt x/\log x} \geq  \frac{x-\vartheta(x)}{\sqrt{x}} + \frac{2 c}{\log  x}\Bigl(1 + \frac{5}{\log x} \Bigr) + A \frac{\log  x}{\sqrt x}
\]
for all $x\in[\max\{a,10^7\},b]$. Furthermore, the implication
\[
 t-\vartheta(t)>0\; \text{for $2\leq t \leq T$}\quad \Rightarrow\quad \li(t)-\pi(t) >0 \;\text{for $2\leq t \leq T$}
\]
holds.
\end{lemma}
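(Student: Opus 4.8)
The claim has two parts: the two inequalities for $\frac{\li(x)-\pi(x)}{\sqrt x/\log x}$, and the implication at the end. The plan is to obtain the inequalities by copying the proof of Lemma \ref{l:psi->pi-star} verbatim, with the pair $(\psi,\pi^*)$ replaced by $(\vartheta,\pi)$, and then to deduce the implication from the very same partial-summation identity, now specialised to the base point $2$.

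For the inequalities: partial summation --- the exact analogue of \eqref{e:psi->pi-star}, via $\pi(x)=\int_{a^+}^{x}\frac{d\vartheta(t)}{\log t}$ together with an integration by parts for $\li$ --- gives
\[
\pi(x)-\pi(a)=\li(x)-\li(a)-\frac{x-\vartheta(x)}{\log x}+\frac{a-\vartheta(a)}{\log a}-\int_a^x\frac{t-\vartheta(t)}{t\log(t)^2}\,dt .
\]
Rearranging and inserting the definition of $A$ writes $\frac{\li(x)-\pi(x)}{\sqrt x/\log x}$ as $\frac{x-\vartheta(x)}{\sqrt x}$, plus the boundary term determined by $A$, plus $\frac{\log x}{\sqrt x}\int_a^x\frac{t-\vartheta(t)}{t\log(t)^2}\,dt$. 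Since $c\sqrt t\le t-\vartheta(t)\le C\sqrt t$ on $[a,b]$, the last integral lies between $c$ and $C$ times $\int_a^x\frac{dt}{\sqrt t\,\log(t)^2}$, and \eqref{e:rem-integral} bounds $\int_a^x\frac{dt}{\sqrt t\,\log(t)^2}$ by $2\frac{\sqrt x}{\log(x)^2}\bigl(1+\frac5{\log x}\bigr)$ for $a\ge 12$, $x\ge 10^7$; multiplying by $\frac{\log x}{\sqrt x}$ and using $c\le 0\le C$ produces the two displayed inequalities on $[\max\{a,10^7\},b]$.

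For the implication I would run the same partial summation with $a=2$. Since $\vartheta(2)=\log 2$, $\pi(2)=1$ and $\li(2)$ is an explicit constant, the identity collapses to
\[
\li(x)-\pi(x)=\Bigl(\li(2)-\frac{2}{\log 2}\Bigr)+\frac{x-\vartheta(x)}{\log x}+\int_2^x\frac{t-\vartheta(t)}{t\log(t)^2}\,dt ,
\]
valid for all $x\ge 2$, with $\li(2)-\frac{2}{\log 2}=-1.840\ldots$. Assuming $t-\vartheta(t)>0$ on $[2,T]$, for every $x\in[2,T]$ the summand $\frac{x-\vartheta(x)}{\log x}$ is nonnegative and the integrand on $[2,x]$ is nonnegative, so $\int_2^x\frac{t-\vartheta(t)}{t\log(t)^2}\,dt$ is nonnegative and non-decreasing in $x$. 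A direct evaluation of $\int_2^{11}\frac{t-\vartheta(t)}{t\log(t)^2}\,dt$ --- which involves only the elementary values of $\vartheta$ on $[2,11)$ --- shows that it already exceeds $\frac{2}{\log 2}-\li(2)=1.840\ldots$, so $\li(x)-\pi(x)>0$ for all $x\in[11,T]$. For $2\le x<11$ I would check $\li(x)-\pi(x)>0$ by hand: this function increases on every prime-free subinterval and drops by $1$ at $2,3,5,7$, so it suffices to verify positivity at those four primes (the minimum being $\li(2)-1>0$).

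The first half is entirely routine, being a line-by-line transcription of Lemma \ref{l:psi->pi-star}, so the only genuine content is the threshold in the implication. The one step that needs care is the numerical comparison $\int_2^{11}\frac{t-\vartheta(t)}{t\log(t)^2}\,dt>\frac{2}{\log 2}-\li(2)$; once it is in place, monotonicity of the integral (furnished by the hypothesis) carries positivity of $\li(x)-\pi(x)$ from $x=11$ across all of $[11,T]$, while the bounded range $[2,11)$ is disposed of by inspection. I do not expect any obstacle beyond confirming that single inequality, whose margin (about $0.2$) is comfortable.
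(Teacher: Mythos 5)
Your proposal is correct. The first part is indeed a verbatim transcription of the proof of Lemma~\ref{l:psi->pi-star}, via the identity \eqref{e:theta->pi} and the bound \eqref{e:rem-integral}. For the implication you take the base point $a=2$, whereas the paper takes $a=10$. The difference matters: rearranging \eqref{e:theta->pi} gives $\li(x)-\pi(x)=\bigl(\li(a)-\pi(a)-\frac{a-\vartheta(a)}{\log a}\bigr)+\frac{x-\vartheta(x)}{\log x}+\int_a^x\frac{t-\vartheta(t)}{t\log(t)^2}\,dt$, and with $a=10$ the boundary constant $\li(10)-\pi(10)-\frac{10-\vartheta(10)}{\log 10}\approx 0.145$ is already positive, so under the hypothesis every term is nonnegative for $x\ge 10$ and only $[2,10)$ needs a direct check --- no integral estimate at all. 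Your choice $a=2$ makes the boundary constant $\li(2)-\tfrac{2}{\log 2}\approx -1.840$ negative, so you must additionally verify $\int_2^{11}\frac{t-\vartheta(t)}{t\log(t)^2}\,dt>\frac{2}{\log 2}-\li(2)$. This does hold (the left-hand side is about $2.07$, so your estimated margin of roughly $0.2$ is accurate), and the computation is elementary since $\vartheta$ is constant on each of $[2,3)$, $[3,5)$, $[5,7)$, $[7,11)$; nevertheless it is extra work that the paper's choice avoids. Both routes are valid; choosing $a$ so that the boundary constant is already positive is simply the more economical move.
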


\begin{proof}
The first assertion follows from 
\begin{equation}\label{e:theta->pi}
\pi(x) - \pi(a) = \li(x)-\li(a) - \frac{x-\vartheta(x)}{\log x} + \frac{a-\vartheta(a)}{\log a} - \int_{a}^x \frac{t-\vartheta(t)}{t \log(t)^2}\, dt
\end{equation}
in the same way as in the proof of Lemma \ref{l:psi->pi-star}. The second part is well-known and follows, e.g., by taking $a=10$ in \eqref{e:theta->pi} since
\[
\pi(10) - \li(10) + \frac{10-\vartheta(10)}{\log(10)} > 0.1.
\]
\end{proof}

Choosing $a=1,500$ in Lemma \ref{l:theta->pi} and using \eqref{e:theta-upper} gives \eqref{e:pi-upper} for $10^7\leq x \leq 10^{19}$ and the remaining values have again been checked directly. The bound \eqref{e:pi-lower} follows from \eqref{e:theta-lower} and \cite[Theorem 19]{RS62}.

\bibliographystyle{amsplain}
\bibliography{/home/janka/TeX/inputs/jankabib.bib}

\end{document}